\title[The Convex Riemannian Proximal Gradient Method]{The Intrinsic Riemannian Proximal Gradient Method for Convex Optimization}
\author[R. Bergmann]{Ronny Bergmann\orcidlink{0000-0001-8342-7218}}
\address[R. Bergmann]{Norwegian University of Science and Technology, Department of Mathematical Sciences, NO-7041 Trondheim, Norway}
\email{\detokenize{ronny.bergmann@ntnu.no}}
\urladdr{https://www.ntnu.edu/employees/ronny.bergmann}
\author[H. Jasa]{Hajg Jasa\orcidlink{0009-0002-7917-0530}}
\address[H. Jasa]{Norwegian University of Science and Technology, Department of Mathematical Sciences, NO-7041 Trondheim, Norway}
\email{\detokenize{hajg.jasa@ntnu.no}}
\urladdr{https://hajg-ijk.github.io/}
\author[P. John]{Paula John\orcidlink{0009-0009-2500-5485}}
\address[P. John]{RWTH Aachen University, Institute for Geometry and Applied Mathematics, Im Süsterfeld 2, D-52072 Aachen, Germany}
\email{\detokenize{john@igpm.rwth-aachen.de}}
\urladdr{https://www.igpm.rwth-aachen.de/team/john}
\author[M. Pfeffer]{Max Pfeffer\orcidlink{0000-0002-7739-4031}}
\address[M. Pfeffer]{Universität Potsdam, Faculty of Health Sciences Brandenburg, Am Mühlenberg 9, Building 62 (H-Lab), 14476 Potsdam – Golm, Germany}
\email{\detokenize{max.pfeffer@uni-potsdam.de}}
\urladdr{https://www.maxpfeffer.com/}
\date{\today}
\dedicatory{}
\begin{document}

% Insert the abstract.
\begin{abstract}
We consider a class of (possibly strongly) geodesically convex optimization problems on manifolds with bounded sectional curvature, where the objective function splits into the sum of a smooth and a possibly nonsmooth function.
We introduce an intrinsic convex Riemannian proximal gradient (CRPG) method that employs the manifold proximal map for the nonsmooth step, without operating in the embedding or tangent space.
A sublinear convergence rate for convex problems and a linear convergence rate for strongly convex problems are established, as well as an improved rate of convergence to $\varepsilon$-stationary points compared to the nonconvex case.
Furthermore, we derive fundamental proximal gradient inequalities that generalize the Euclidean case.
Our numerical experiments on hyperbolic spaces, manifolds of symmetric positive definite matrices, and spheres demonstrate the performance of our method, using efficient or closed-form intrinsic proximal maps.

\end{abstract}

% Insert the keywords.
\keywords{splitting methods, proximal methods, Riemannian manifolds, convex functions}

% Insert the Mathematics Subject Classification.
\makeatletter
\ltx@ifpackageloaded{hyperref}{%
\subjclass[2010]{\href{https://mathscinet.ams.org/msc/msc2020.html?t=90C25}{90C25}, \href{https://mathscinet.ams.org/msc/msc2020.html?t=49Q99}{49Q99}, \href{https://mathscinet.ams.org/msc/msc2020.html?t=49M30}{49M30}, \href{https://mathscinet.ams.org/msc/msc2020.html?t=65K10}{65K10}}
}{%
\subjclass[2010]{90C25, 49Q99, 49M30, 65K10}
}
\makeatother

% Typeset the opening page.
\maketitle

% Insert the document body.
\section{Introduction}%
\label{section:introduction}

A prominent approach to nonsmooth optimization are splitting methods, where the objective function
can be split, for example
\begin{equation}
    \label{eq:splitting}
    \argmin_{p \in \cM}
    f(p)
    ,
    \qquad
    \text{where}
    \quad
    f(p)
    =
    g(p)
    +
    h(p)
    ,
\end{equation}
where $\cM$ is a Riemannian manifold,
and both $g$ and $h$ have certain properties,
for example both are convex, lower-semicontinuous, and proper.

These splitting methods have been studied extensively in the Euclidean case,
and several methods have been generalized to the Riemannian setting.
Starting with an even more general splitting into $n$ functions instead of two,
the cyclic proximal point algorithm (CPPA)~\cite{Bacak:2014:1} is
one of the first such splitting methods on manifolds.
Another approach was introduced in~\cite{BergmannPerschSteidl:2016:1},
where the Douglas-Rachford algorithm (DRA)~\cite{DouglasRachford:1956:1} was generalized to Hadamard manifolds.
An algorithm that is known to be equivalent to DRA in the Euclidean case is the Chambolle-Pock algorithm~\cite{ChambollePock:2011:1},
which is very popular especially in signal and image processing.
This was generalized to Riemannian manifolds in~\cite{BergmannHerzogSilvaLouzeiroTenbrinckVidalNunez:2021:1},
which further led to investigations on duality on manifolds~\cite{SilvaLouzeiroBergmannHerzog:2022:1,SchielaHerzogBergmann:2024}.

Another popular method that is based on splitting is the proximal gradient method.
In the Euclidean case,~\cite[Chapter 10]{Beck:2017:1} provides a thorough overview.
Here, $g$ is assumed to be ($\lipgrad$-)smooth and $h$ to be convex and potentially nonsmooth.
In this case, the Euclidean proximal gradient method can be shown to have a convergence rate of $\cO(1/k)$ whenever $g$ is convex.

To the best of our knowledge, the authors in~\cite{ChenMaMan-ChoSoZhan:2020:1} first introduced a partially convex
proximal gradient version for the Stiefel manifold, phrasing the nonsmooth part as a constrained problem in the embedding, and requiring the nonsmooth function to be the restriction of a convex function in the embedding.
This was generalized to general Riemannian manifolds in~\cite{HuangWei:2021:1},
where the authors phrase the nonsmooth subproblem as a problem in the tangent space at the current iterate, and do not consider convexity assumptions in their general analysis.
The linear convergence of this algorithm under strong retraction convexity has been studied in~\cite{ChoiChunJungYun:2024:1}.

All these works are based on a “generalized notion of the proximal map”~\cite[(3.1)]{HuangWei:2021:1}, and solving the corresponding subproblem either in the embedding or the tangent space of the current iterate.
This general notion phrases the proximal map as a retraction-based subproblem in a tangent space, which is then solved with Euclidean optimization tools, sometimes rather advanced ones such as semismooth Newton methods.
It is also often restricted to specific manifolds, usually embedded manifolds or quotients thereof.
The convexity considerations are either retraction-based or coming from the embedding, and they usually require some kind of Lipschitz property.
Most of the convergence results in these works assume that $g$ or $h$ fulfil some convexity property, either along geodesics or along curves generated by retractions when using a retraction-based generalization of the proximal subproblem.
In the work of~\cite{HuangWei:2021:1}, a “retraction smoothness” of $g$ is required, but $h$ can be nonsmooth and nonconvex for their convergence investigations, as long as $f$ fulfils a Kurdyka-Łojasiewicz (KL) property on manifolds.
However, their algorithm employs the embedding in a way that $h$ is required to have certain properties thereon in order to solve their approximation to the proximal map.

An intrinsic proximal map that allows for optimization on manifolds which are not necessarily submanifolds of a Euclidean space has been introduced first in~\cite{FengHuangSongYingZeng:2021}.
Here, the authors restrict their investigation to Hadamard manifolds and they assume the objective to have the KL property.
In a related work,~\cite{MartinezRubioPokutta:2023:1} consider gradient schemes that combine approximate solutions to proximal maps defined on the manifold, leading to a globally accelerated first-order algorithm on Hadamard manifolds.
Furthermore,~\cite{MartinezRubioPokuttaRoux:2024:1} introduce a composite Riemannian descent method for geodesically convex functions that solves a proximal-type linearized subproblem at each iteration, which is also defined on the manifold.
Their convergence rates are in line with their Euclidean counterparts, albeit accounting for the curvature of the manifold.
After the first version of this manuscript was released on arXiv, the work in \cite{BentoSantiago:2026:1} was published.
The authors in said paper analyze the convergence properties of an intrinsic Riemannian Proximal Gradient method similar to ours.
The main difference to our work lies in their assumptions: they limit their analysis to Hadamard manifolds only, with the smooth component $g$ not assumed to be convex, whereas the nonsmooth component $h$ is assumed to be convex. 
Furthermore, they do not assume the gradient of $g$ to be Lipschitz.

\subsection{Contributions}
In this work, we introduce a formulation of the Riemannian proximal gradient method for (strongly) geodesically convex functions that actually employs a proximal map defined on the manifold for the nonsmooth step.
This continues our work on that method which was started in~\cite{BergmannJasaJohnPfeffer:2025:1}, where we investigate the nonconvex case.
It also relates the new proximal gradient method directly back to both the PPA~\cite{FerreiraOliveira:2002:1} as well as gradient descent, \eg~\cite{ZhangSra:2016:1}.
The analysis in this work is carried out on Riemannian manifolds of bounded sectional curvature, which can also be \emph{positive}.
We therefore generalize the findings of~\cite{FengHuangSongYingZeng:2021} while also using different assumptions, as well as investigate several properties from~\cite[Chapter 10]{Beck:2017:1}.

Most prominently:
\begin{itemize}
  \item \cref{thm:convex-convergence-rate} establishes the same sublinear convergence rate in the function values when the objective is assumed to be geodesically (not strongly) convex and it outlines a regime where this decay is actually linear.
  \item \cref{thm:convex-epsilon-stationary-pt} improves the convergence rate to $\varepsilon$-stationary points from $\cO \bigl(\frac{1}{\varepsilon^2}\bigr)$ in~\cite{BergmannJasaJohnPfeffer:2025:1} to $\cO \bigl(\frac{1}{\varepsilon}\bigr)$ under convexity assumptions.
  \item \cref{thm:mu-strongly-convex-convergence-rate} provides a linear convergence rate when strong geodesic convexity of the objective is assumed.
\end{itemize}
We also generalize the fundamental prox-grad inequality, which is extensively used in Beck's analysis, and showcase how this approach does \emph{not} directly provide convergence properties even in the simpler Hadamard case.
Finally, we provide open source code, as well as reproducible, extensive numerical experiments to test the performance of this method.
We show that in the examples we consider, the proximal map of $h$ either exists in closed-form or is efficiently computable intrinsically on the manifold.

\subsection{Organization}
The remainder of this paper is organized as follows.
After introducing the necessary notation and some preliminary properties in~\Cref{section:preliminaries}, we derive the new formulation of the Convex Riemannian Proximal Gradient (CRPG) method in \Cref{section:CRPG}.
In \Cref{section:Convergence}, we present convergence results for the geodesically convex and strongly convex cases.
In \Cref{section:proximal-gradient-inequalities}, we derive Riemannian variants of the fundamental prox-grad inequality that generalize~\cite[Theorem 10.16]{Beck:2017:1} and discuss their usefulness.
\Cref{section:numerics} illustrates the performance of the algorithm and its convergence, and \Cref{section:conclusion} concludes the paper.

\section{Preliminaries}%
\label{section:preliminaries}

In this section, we recall some notions from Riemannian geometry.
For more details, the reader may wish to consult~\cite{DoCarmo:1992:1,Boumal:2023:1}.
Let $\cM$ be a smooth manifold with Riemannian metric $\riemannian{\cdot}{\cdot}$ and let $\covariantDerivativeSymbol$ be its Levi-Civita connection.
Given $p, q \in \cM$, we denote by $C_{pq}$ the set of all piecewise smooth curves $\gamma \colon[0,1] \to \cM$ from~$p = \gamma(0)$ to~$q = \gamma(1)$.
The \emph{arc length} of $\gamma \in C_{pq}$, $L \colon C_{pq} \to \bbR_{\ge 0}$ is defined as
${
	L(\gamma)
	=
	\int_0^1 \riemanniannorm{\dot \gamma}[\gamma(t)] \, \d t
	,
}$
where $\riemanniannorm{\cdot}[p]$ is the norm induced by the Riemannian metric in the tangent space $\tangentSpace{p}$ at the point~$p$.
The inner product given by the Riemannian metric at~$p$ is ${\riemannian{\cdot}{\cdot}[p] \colon \tangentSpace{p} \times \tangentSpace{p} \to \bbR}$.
The explicit reference to the base point will be omitted whenever the base point is clear from context.
The sectional curvature at $p \in \cM$ is
$
	K_p(X_p,Y_p)
	=
	\frac{
		\riemannian{\mathrm{R}_p(X_p,Y_p) Y_p}{X_p}
	}{
		\riemanniannorm{X_p}^2
		\,
		\riemanniannorm{Y_p}^2
		-
		\riemannian{X_p}{Y_p}^2
	}
	,
$
where $\mathrm{R}$ is the Riemann curvature tensor, and $X_p, Y_p \in \tangentSpace{p}$ are linearly independent.

The Riemannian distance between two points, $\dist \colon \cM \times \cM \to \bbR_{\ge 0} \cup \{+\infty\}$, is given by
${
	\dist(p,q)
	=
	\inf_{\gamma \in C_{pq}} L(\gamma)
	.
}$
A \emph{geodesic arc} from $p \in \cM$ to $q \in \cM$ is a smooth curve $\gamma \in C_{pq}$ that is parallel along itself, \ie, $\covariantDerivative{\dot \gamma(t)}[\dot \gamma(t)] = 0$ for all $t \in (0,1)$.
A finite-length geodesic arc is \emph{minimal} if its arc length coincides with the Riemannian distance between its extreme points.
Given two points $p, q \in \cM$, we denote with $\geodesic{p}{q}$ a minimal geodesic arc that connects $p = \geodesic{p}{q}(0)$ to $q = \geodesic{p}{q}(1)$.
Given a point $p\in \cM$ and a tangent vector $X_p \in \tangentSpace{p}$, a geodesic can also be specified
as the curve $\geodesic{p}{X_p}$ such that $\geodesic{p}{X_p}(0)=p$ and $\geodesic{p}{X_p}'(0) = X_p$.
The \emph{exponential map} at $p \in \cM$ maps a tangent vector $X_p \in \tangentSpace{p}$ to the endpoint of the corresponding geodesic, $\exponential{p}(X_p) = \geodesic{p}{X_p}(1)$.
If it exists, the inverse map is called the \emph{logarithmic map} $\logarithm{p}\colon \cM \rightarrow \tangentSpace{p}$.
We call a space \emph{uniquely geodesic} if every two points in that space are connected by one and only one geodesic in that set.
In this case the logarithmic map exists everywhere.
A set $\cU \subseteq \cM$ is said to be \emph{(uniquely) geodesically convex} if for any $p,q \in \cU$, there exists a (unique) minimal geodesic arc $\geodesic{p}{q}$ which lies entirely in~$\cU$.
Observe that if a manifold $\cM$ has positive sectional curvature, the set $\cU$ may not be allowed to have arbitrarily large diameter.
For example, if $\kmin > 0$, for $\cU$ to be uniquely geodesically convex, its diameter must be bounded by $\diam(\cU) < \pi/\sqrt{\kmin}$.
See, \eg,~\cite[Proposition~4.2~(iii)]{LiYao:2012:1},~\cite[Proposition~4.1]{WangLiYao:2016:1} or~\cite[Proposition~II.1.4]{BridsonHaefliger:1999:1}.

We denote the \emph{parallel transport} from~$p$ to~$q$ along~$\geodesic{p}{q}$ with respect to the connection~$\covariantDerivativeSymbol$ by $\parallelTransport{p}{q} \colon \tangentSpace{p} \to \tangentSpace{q}$.
This is defined by
$
	\parallelTransport{p}{q} Y_p
	=
	X_q
$
for $Y_p \in \tangentSpace{p}$, where $X$ is the unique smooth vector field along $\geodesic{p}{q}$ which satisfies
$
	X_p
	=
	Y_p
	\text{ and }
	\covariantDerivative{\dot \gamma}{X}
	=
	0
	.
$
Note $\parallelTransport{p}{q}$ depends on the choice of $\geodesic{p}{q}$.

Given a differentiable function $f \colon \cM \to \bbR$, we denote the \emph{Riemannian gradient} of $f$ at $p \in \cM$ by $\grad f(p) \in \tangentSpace{p}$.
See~\cite[Definition~8.57]{Boumal:2023:1} for more details.
A function $f \colon \cM \to \bbR$ is said to be Lipschitz at $p \in \cM$ if there exists an open neighborhood $\cU$ of $p$ and a constant $L_p > 0$ such that
\begin{equation*}
    \abs{
        f(y)
        -
        f(z)
    }
    \le
    L_p
    \dist(y,z)
    \quad
    \text{for all }
    y
    ,
    z
    \in
    \cU
    .
\end{equation*}
Furthermore, $f$ is called locally Lipschitz if it is Lipschitz at every point $p \in \cM$.

For a geodesically convex set $\cU \subseteq \cM$, a proper function $f \colon \cU \to \eR$ and a point $p \in \dom(f)$, a tangent vector $X_p \in \tangentSpace{p}$ is said to be a \emph{subgradient} of $f$ at $p$ if $f(q) \ge f(p) + \riemannian{X_p}{\logarithm{p}{q}}$ for all $q \in \cU$.
The set of all subgradients of $f$ at $p$ is the \emph{subdifferential} of $f$ at $p$, and it is denoted by $\partial f(p)$.
This is a nonempty, convex, and compact set at any point in $\cM$.
Given a real number $\strongcvxconst > 0$, a function $f \colon \cU \to \eR$ is said to be \emph{($\strongcvxconst$-strongly) geodesically convex} over $\cU$ if, for any geodesic arc $\geodesic<s> \colon [0, 1] \to \cM$ contained in $\cU$, the composition $f \circ \geodesic<s>$ is ($\mu$-strongly with $\mu = \strongcvxconst L(\geodesic<s>)^2$) convex in the Euclidean sense; \cf~\cite[Definition~11.5]{Boumal:2023:1}.
This is equivalent to the following inequality holding for all $p,q \in \cU$ and all $X_p \in \partial f(p)$
\begin{equation}
    \label{eq:strong-convexity}
    f(q)
    \ge
    f(p)
    +
    \inner{
        X_p
    }{\logarithm{p}{q}}
    +
    \frac{\strongcvxconst}{2}
    \dist^2(p, q)
    .
\end{equation}
If the previous inequality is satisfied with $\strongcvxconst = 0$, the function $f$ is equivalently geodesically convex over $\cU$.
Furthermore, if $f$ is subdifferentiable and geodesically convex on an open geodesically convex set, $0_p \in \partial f(p)$ if and only if $p$ is a global minimizer of $f$ in $\cM$.
See, \eg,~\cite{Udriste:1994:1} for a thorough analysis of convex functions on Riemannian manifolds.
Given a real number $L_f > 0$, a differentiable function $f$ is said to be \emph{$L_f$-smooth} over $\cU$ if
\begin{equation*}
    f(q)
    \le
    f(p)
    +
    \inner{\grad f(p)}{\logarithm{p}{q}}
    +
    \frac{L_f}{2}
    \dist^2(p, q)
    ,
\end{equation*}
for any $p,q \in \cU$.
For a geodesically convex function $f$ and a real number $\lambda > 0$, define
\begin{equation*}
    \prox_{\lambda f}(p)
    \coloneq
    \argmin_{q \in \cM}
        f(q)
        +
        \frac{1}{2 \lambda} \dist^2(p, q)
    ,
\end{equation*}
if the minimizer exists.
If now $f = g + h$ with $g$ of class $\cC^1(\cU)$ and $h$ locally Lipschitz over $\cU$, then~\cite[Lemma~3.1]{BentoFerreiraOliveira:2015:1} gives
\begin{equation}
    \label{eq:clarke-subdifferential-of-sum}
    \partial (g + h) (p)
    =
    \grad g(p)
    +
    \partial h(p)
    ,
\end{equation}
for all $p \in \cU$.
Therefore, the condition $0_p \in \partial(g+h)(p) = \grad g(p) + \partial h(p)$ is equivalent to the stationarity condition
\begin{equation}
    \label{eq:stationarity-condition}
    - \grad g(p)
    \in
    \partial h(p)
    .
\end{equation}

In the analysis of the proximal gradient method, we will also need the following curvature-dependent quantities.
Given real numbers $\kappa_1, \kappa_2$, which will typically be lower and upper bounds to the sectional curvature of $\cM$, we define, for $s \in \bbR$,
\begin{equation}
	\label{eq:sigma-definition}
	\newtarget{def:sigma-curvature}{
        \sigma_{\kappa_1, \kappa_2}(s)
    }
    \coloneq
	\max \paren[big]\{\}{%
        \zeta_{1, \kappa_1}(s)
		,
		\;
        \abs{\zeta_{2, \kappa_2}(s)}
	}
	,
\end{equation}
where
\begin{equation}
	\label{eq:hessian-eigs}
	\begin{aligned}
		\newtarget{def:zeta-first}{
            \zeta_{1, \kappa_1}(s)
        }
        &
        \coloneq
        \begin{cases}
            1
            &
            \text{if }
            \kappa_1 \ge 0
            ,
            \\
            \sqrt{- \kappa_1} \, s \coth(\sqrt{- \kappa_1} \, s)
            \quad
            &
            \text{if }
            \kappa_1 < 0
            ,
        \end{cases}
		\\
		\newtarget{def:zeta-second}{
            \zeta_{2, \kappa_2}(s)
        }
        &
        \coloneq
        \begin{cases}
            1
            &
            \text{if }
            \kappa_2 \le 0
            ,
            \\
            \sqrt{\kappa_2} \, s \cot(\sqrt{\kappa_2} \, s)
            \quad
            &
            \text{if }
            \kappa_2 > 0
            ,
        \end{cases}
	\end{aligned}
\end{equation}
and
\begin{equation}
  \begin{aligned}
    \label{eq:generalized-sine-positive-zero}
    \newtarget{def:generalized-sine-positive-zeros}{
        \pikappa
    }
    &\coloneq
    \begin{cases}
        \infty
        &
        \text{if }
        \kappa \le 0
        ,
        \\
        \frac{\pi}{\sqrt{\kappa}}
        &
        \text{if }
        \kappa > 0
        .
    \end{cases}
  \end{aligned}
\end{equation}
For a given $\lambda > 0$ and $p \in \cM$, we will also often use the following notation for the gradient step throughout the manuscript
\begin{equation}
    \label{eq:gradient-step}
    \newtarget{def:gradient-step}{
        \gradientstep{p}
    }
    \coloneq
    \exponential{p}(-\lambda \grad g(p))
    .
\end{equation}
Furthermore, when referring to specific iterations of the algorithm, we will always assume $\gradientstep{\sequence{p}{n}}$ to be prescribed by the stepsize at the $n$-th step, that is, we write $\gradientstep{\sequence{p}{n}} = \exponential{\sequence{p}{n}}(-\sequence{\lambda}{n} \grad g(\sequence{p}{n}))$ for a given $n \in \bbN$, unless otherwise stated.
Finally, we will also denote the sublevel set of the objective $f$ at a point $\sequence{p}{0} \in \cM$ with
$
  \startlevelset
  \coloneq
  \setDef[auto]{p \in \cM}{
      f(p)
      \le
      f(\sequence{p}{0})
  }
  .
$

\section{The Riemannian Proximal Gradient method}%
\label{section:CRPG}

The main idea for the derivation of the proximal gradient method is to replace $g$ by a proximal regularization of its linearized function~\cite[Section~2.2]{BeckTeboulle:2009:1}.
For $\lambda > 0$ and a point $p$ we compute a new candidate as
\begin{equation*}
    \argmin_{q\in\cM}
    g(p)
    +
    \riemannian{\grad g(p)}{\logarithm{p}q}
    +
    \frac{1}{2\lambda}
    \dist^2(p, q)
    +
    h(q)
    .
\end{equation*}
As in~\cite[Section~3]{BergmannJasaJohnPfeffer:2025:1}, up to first order we obtain
\begin{equation*}
    \prox[big]{\lambda h}(
        \exponential{p}(-\lambda\grad g(p))
    )
    =
    \argmin_{q\in\cM}
    \frac{1}{2\lambda}
    \dist^2(
        \exponential{p}(-\lambda\grad g(p))
        ,
        q
    )
    +
    h(q)
    .
\end{equation*}
Note that this is defined entirely on the manifold, which is why we call it an \emph{intrinsic} method.
The formulation is similar to the one in~\cite{FengHuangSongYingZeng:2021}, where, in addition, an accelerated step is taken if it provides a decrease of the objective.
The final algorithm is summarized in \cref{algorithm:CRPG}.
\begin{algorithm}[htp]
	\caption{Convex Riemannian Proximal Gradient Method}%
	\label{algorithm:CRPG}
	\begin{algorithmic}[1]
		\Require
        $g$, %
        $\grad g$, %
        $h$, %
        $\prox_{\lambda h}$, %
        a sequence $\sequence{\lambda}{k}$, %
		an initial point $\sequence{p}{0} \in \cM$.
    \Ensure
    minimizer $\sequence{p}{k_\ast}$.
		\While{convergence criterion is not fulfilled}
		\State $
            \sequence{p}{k+1}
            =
            \prox_{\sequence{\lambda}{k} h}\bigl(
                \exponential{\sequence{p}{k}}(-\sequence{\lambda}{k} \grad g(\sequence{p}{k}))
            \bigr)
		$
		\State Set $k \coloneq k+1$
		\EndWhile
	\end{algorithmic}
\end{algorithm}
We emphasize that, in the examples we consider, the proximal map of $h$ is either available in closed-form or can be efficiently computed on the manifold.
See \cref{section:numerics} for more details.

\section{Convergence}%
\label{section:Convergence}

In this section, we analyze the convergence behavior of the Convex Riemannian Proximal Gradient (CRPG) method.
When $g$ is assumed to be geodesically convex, we obtain a sublinear convergence rate of the cost function.
Furthermore, an improved convergence rate to $\varepsilon$-stationary points as compared to the nonconvex case (NCRPG) analyzed in~\cite{BergmannJasaJohnPfeffer:2025:1} is proven.
In case $g$ is strongly geodesically convex, we can show a linear convergence rate.

Let us introduce the standing set of assumptions that we will use in the rest of this section.
\begin{assumption}[Standing assumptions] \hfill%
	\label{assumption:assumptions}
	\begin{enumerate}
    \item\label[item]{assumption:assumptions:bounded-sectional-curvature}
      There exist two numbers $\kmin, \kmax \in \bbR$ such that the sectional curvature satisfies $\newtarget{def:k-min}{\kmin} \le K_p(X_p,Y_p) \le \newtarget{def:k-max}{\kmax}$ for all $p \in \cM$ and all linearly independent tangent vectors $X_p, Y_p \in \tangentSpace{p}$.

		\item\label[item]{assumption:assumptions:geodesically-convex-function}
      $h \colon \cM \to \eR$ is proper, closed, and geodesically convex over the uniquely geodesically convex set $\dom (h)$.

		\item\label[item]{assumption:assumptions:smooth-function}
      $g \colon \cM \to \eR$ is proper and closed, $\dom(g)$ is uniquely geodesically convex, $\dom(h) \subseteq \interior \dom(g)$, and $g$ is $\newtarget{def:gradg-lipschitz-constant}{\lipgrad}$-smooth over $\interior \dom(g)$.

    \item\label[item]{assumption:assumptions:nonempty-optimal-set}
      It holds $\cS \coloneq \argmin_{p \in \cM} f(p) \neq \emptyset$ and we define $f_{\text{opt}} \coloneq f(p)$ for $p \in \cS$.
    \item\label[item]{assumption:bounded-level-sets}
      There exists a number $c \in \bbR$ such that $\{p \in \dom(f) \mid f(p) \le c\}$ is bounded.
  \end{enumerate}
\end{assumption}
Under these assumptions, we introduce the \emph{iteration map} $\imapOp[auto]{g}{h}{\lambda} \colon \cM \to \cM$
\begin{equation*}
    \imap[auto]{g}{h}{\lambda}{p}
    \coloneq
    \prox{\lambda h}\Bigl(
        \exponential[big]{p}(-\lambda \grad g(p))
    \Bigr)
    ,
\end{equation*}
which is well-defined on $\dom(h)$ since the function $h$ is geodesically convex over this set, making the proximal map single-valued.
As $g, h$ are fixed in the following, we will use $\Imap[auto]{\lambda}{p}$ as a shorthand notation for the iteration map.
\begin{remark}%
  \label{remark:compact-level-sets}
  Note that by \cref{assumption:assumptions:geodesically-convex-function}, \cref{assumption:assumptions:smooth-function}, and \cref{assumption:bounded-level-sets} of \cref{assumption:assumptions}, the sublevel set $\cL_{\sequence{p}{0}}$ is compact for every $\sequence{p}{0} \in \dom(f)$.
  This also implies that for each $\sequence{p}{0}$, there exist numbers $\hlowerbound, \hupperbound, \gradgupperbound \in \bbR$ such that $\hlowerbound \le h(p) \le \hupperbound$, and $\riemanniannorm{\grad g(p)} \le \gradgupperbound$ for all $p \in \cL_{\sequence{p}{0}}$.
\end{remark}

In this work, the analysis requires that the current iterate, the gradient step, and the next iterate be within the injectivity radius of $\cM$ at the current iterate.
This is achieved by bounding the stepsize according to~\cite[Lemma~4.3]{BergmannJasaJohnPfeffer:2025:1} via
\begin{equation}
  \label{eq:lambda-delta}
  \newtarget{def:delta-stepsize}{
      \lambdadelta
  }
  \coloneq
  \frac{
          \sqrt{
              4 \, (\hupperbound - \hlowerbound)^2
              +
              \frac{{\pikmax}^2}{(2 + \delta)^2} \, \gradgupperbound^2
          }
          -
          2 \, (\hupperbound - \hlowerbound)
  }{2 \, \gradgupperbound^2}
  ,
\end{equation}
for some $\delta > 0$.
Then we have for all $p \in \cL_{\sequence{p}{0}}$ and $\lambda \le \lambdadelta$
\begin{equation}
    \label{eq:boundedness-of-iteration}
    \max\{
        \dist(p, \gradientstep{p})
        ,
        \dist(p,\Imap{\lambda}{p})
    \}
    \le
    \deltaradius,
\end{equation}
where $\deltaradius \coloneq \frac{\pikmax}{2+\delta}$.
Note that, if $\kmax \le 0$, $\lambda_\delta \equiv + \infty$ for all $\delta > 0$.
The same lemma yields
\begin{equation}
  \label{ineq:zetadelta}
  \zetasecond{\dist(p, q)}
  \ge
  \zeta_\delta
  >
  0
  ,
\end{equation}
for all points $q \in \cB(p, \deltaradius)$, with $\cB(p, \deltaradius)$ being the open ball around $p \in \dom(f)$ of radius $\deltaradius$, where
\begin{equation}
  \label{eq:zeta-delta}
  \newtarget{def:delta-curvature}{
      \zetadelta
  }
  \coloneq
  \begin{cases}
    1
    &
    \text{if }
    \kmax \le 0
    ,
    \\
    \frac{\pi}{2 + \delta} \cot \left(\frac{\pi}{2 + \delta} \right)
    \quad
    &
    \text{if }
    \kmax > 0
    ,
  \end{cases}
\end{equation}
which we will use in the stepsize selection.

\begin{remark}%
    \label{remark:stepsizes}
    We will consider a constant stepsize and a backtracking procedure.
    For the constant case, choose $\sequence{\lambda}{k} = \lambda \in \left( 0, \min\{\lambdadelta, \frac{\zetadelta}{\lipgrad} \} \right)$ for a given $\delta > 0$.
    The backtracking procedure is given in~\cref{algorithm:backtracking}.
    The warm start procedure in~\cref{line:warm-start} ensures that every stepsize $\sequence{\lambda}{k}$ is bounded by the initial guess $s$, while giving the possibility to increase the stepsize by a warm start factor of $\theta$ to try to prevent the stepsize from collapsing, \eg due to numerical issues.
    Note that the first condition ensures that \cref{eq:boundedness-of-iteration} holds, allowing the backtracking procedure to be applied without further knowledge of the cost function. Furthermore, \cref{eq:boundedness-of-iteration} is automatically satisfied if the stepsize $\sequence{\lambda}{k}$ falls below $\lambdadelta$.
    This, together with \cref{lemma:convex-stepsize-bounds} below, implies that \cref{algorithm:backtracking} terminates in a finite number of iterations.
    \begin{algorithm}[htp]
        \caption{Backtracking procedure for stepsize selection
        }%
        \label{algorithm:backtracking}
        \begin{algorithmic}[1]
            \Require
            $f$,
            $k \ge 0$,
            last stepsize $\sequence{\lambda}{k-1}$,
            iterate $\sequence{p}{k}$,
            initial guess $s >0$,
            constants $\eta \in (0,1)$,
            $\theta \ge 1$.
            \Ensure
            stepsize $\sequence{\lambda}{k}$.
            \State\label{line:warm-start}
            Set $\sequence{\lambda}{-1} \coloneq s$, and $\sequence{\lambda}{k} = \min\{s, \theta \sequence{\lambda}{k-1}\}$
            \While {true}
                \If{$\max\{\dist(\sequence{p}{k}, \gradientstep{\sequence{p}{k}}), \dist(\sequence{p}{k},\Imap{\sequence{\lambda}{k}}{\sequence{p}{k}})\} \le \deltaradius$}
                    \If{$
                        g\left(
                        \Imap[auto]{\sequence{\lambda}{k}}{\sequence{p}{k}}
                        \right)
                        \le
                        g(\sequence{p}{k})
                        +
                        \riemannian[auto]{
                            \grad g(\sequence{p}{k})
                        }{
                            \logarithm{\sequence{p}{k}}{
                                \Imap[auto]{\sequence{\lambda}{k}}{\sequence{p}{k}}
                            }
                        }
                        $
                        \\
                        $
                        \qquad
                        \qquad
                        \qquad
                        \qquad
                        \qquad
                        \qquad
                        +
                        \frac{\zetadelta}{2 \sequence{\lambda}{k}}
                        \dist^2\left(
                            \sequence{p}{k}
                            ,
                            \Imap[auto]{\sequence{\lambda}{k}}{\sequence{p}{k}}
                        \right)
                        $}
                    \State \textbf{break}
                \EndIf
                \EndIf
                \State Set $\sequence{\lambda}{k} = \eta \sequence{\lambda}{k}$
            \EndWhile
        \end{algorithmic}
    \end{algorithm}
\end{remark}
The following lemma establishes upper and lower bounds on the stepsize in the case where $h$ is geodesically convex, and is a direct generalization of~\cite[Remark~10.19]{Beck:2017:1}.
The proof is therefore omitted.
\begin{lemma}%
  \label{lemma:convex-stepsize-bounds}
  If \cref{assumption:assumptions} holds, then both stepsize rules discussed in \cref{remark:stepsizes} are such that the sufficient decrease condition for $g$, given by
  \begin{equation}
      \label{eq:convex-sufficient-decrease-condition}
      g(\sequence{p}{k+1})
      \le
      g(\sequence{p}{k})
      +
      \riemannian{
          \grad g(\sequence{p}{k})
      }
      {
          \logarithm{\sequence{p}{k}}{\sequence{p}{k+1}}
      }
      +
      \frac{\zetadelta}{2 \sequence{\lambda}{k}}
      \dist^2(
          \sequence{p}{k}
          ,
          \sequence{p}{k+1}
      )
      ,
  \end{equation}
  is satisfied for all $k \in \bbN$.
  Furthermore, the following bounds hold for all the stepsizes $\sequence{\lambda}{k}$ returned by the backtracking procedure
  \begin{equation}
    \min\left\{
        s,
        \frac{\eta \zetadelta}{\lipgrad}
        , \eta \lambdadelta
        \right\}
      \le
      \sequence{\lambda}{k}
      \le
      s
      .
  \end{equation}
  This is equivalent to
  \begin{equation}
      \label{eq:convex-stepsize-bounds}
      \frac{\betastepsize}{\lipgrad}
      \le
      \sequence{\lambda}{k}
      \le
      \frac{\alphastepsize}{\lipgrad}
      ,
  \end{equation}
  for all $k \ge 0$, where
  \begin{equation*}
      \newtarget{def:alpha-stepsize}{
          \alphastepsize
      }
      =
      \begin{cases}
          \lambda \lipgrad & \text{ constant,} \\
          s \lipgrad & \text{ backtracking,}
      \end{cases}
      ,
      \quad
      \newtarget{def:beta-stepsize}{
          \betastepsize
      }
      =
      \begin{cases}
          \lambda \lipgrad & \text{ constant,} \\
          \min\{
            s \lipgrad,
            \eta\zetadelta
            , \eta \lambdadelta
            \}
            & \text{ backtracking,}
      \end{cases}
  \end{equation*}
  with $\lambda$ chosen as in \cref{remark:stepsizes}.
\end{lemma}

\subsection{Convex Objective}

In~\cite{BergmannJasaJohnPfeffer:2025:1}, we have investigated the RPG algorithm for general Riemannian manifolds of bounded sectional curvature and nonconvex $g$ and $h$.
We showed that the algorithm reaches $\varepsilon$-stationary points at a sublinear rate of $\cO \bigl(\frac{1}{\varepsilon^2}\bigr)$.
Furthermore, all accumulation points of the series of iterates are stationary points.
These results clearly hold for the convex setting in particular.
Additionally, we can now improve the rate for obtaining $\varepsilon$-stationary points to $\cO \bigl(\frac{1}{\varepsilon}\bigr)$ in the convex case and show sublinear convergence of the cost function with a rate of $\cO \bigl(\frac{1}{k}\bigr)$.

We start by adding the following assumption to \cref{assumption:assumptions}
\begin{assumption}[Convexity] \hfill%
	\label{assumption:convex-assumptions}
	\begin{itemize}
    \item $g$ is geodesically convex over $\interior\dom(g)$.
    \end{itemize}
\end{assumption}
We can now state our main convergence result of this section, noting that the prox-grad inequalities do not provide a successful strategy to do so, as argued in \Cref{section:proximal-gradient-inequalities}.
\begin{theorem}%
    \label{thm:convex-convergence-rate}
    Let $\sequence{\Delta}{k} \coloneq f(\sequence{p}{k}) - f_{\text{opt}}$ for $k \ge 0$.
    Under the validity of \cref{assumption:assumptions} and \cref{assumption:convex-assumptions}, the sequence $\sequence(){p}{k}$ generated by \cref{algorithm:CRPG} with either of the stepsize strategies stated in \cref{remark:stepsizes} satisfies the following rates
    \begin{itemize}
        \item if at iteration $k-1$, it holds that $\frac{\sequence{\lambda}{k-1} \sequence{\Delta}{k-1}}{\zeta_{1, \kmin}(\sequence{D}{k-1}) \dist^2(\sequence{p}{k-1}, p^\ast)} \ge 1$, then
        \begin{equation}
            \label{eq:linear-convex-convergence-rate}
            \sequence{\Delta}{k}
            \le
            \frac{1}{2}
            \sequence{\Delta}{k-1}
            ;
        \end{equation}
        \item otherwise,
        \begin{equation}
            \label{eq:convex-convergence-rate-2}
            \sequence{\Delta}{k}
            \le
            \frac{\sequence{\Delta}{0}}{1 + k \, \tau \sequence{\Delta}{0}}
            ,
        \end{equation}
    \end{itemize}
    for all $p^\ast \in \cS$ and $k \ge 0$, where
    $
      \tau
      \coloneq
      \min\left\{
        \frac{\betastepsize}{2 \lipgrad \zetafirst{\frac{\alphastepsize}{\lipgrad} \gradgupperbound} R^2},
          \frac{1}{\sequence{\Delta}{0}}
      \right\}
      ,
      $ with $\gradgupperbound$ as in \cref{remark:compact-level-sets}, $\alphastepsize, \betastepsize$ as in \cref{lemma:convex-stepsize-bounds}, $R \coloneq \diam(\startlevelset)$, and $\sequence{D}{n} \coloneq \dist(\sequence{p}{n}, \gradientstep{\sequence{p}{n}})$.
    In particular, for any $\varepsilon > 0$, $\sequence{\Delta}{k} \le \varepsilon$ is achieved as soon as
    \begin{equation}
        \label{eq:convex-complexity}
        k
        \ge
        \max
        \left\{
            \left\lceil
                \log_2
                \left(
                    \frac{\sequence{\Delta}{0}}{\varepsilon}
                \right)
            \right\rceil
            ,
            \left\lceil
                \frac{
                    \sequence{\Delta}{0}
                    -
                    \varepsilon
                }{
                    \tau
                    \varepsilon
                    \sequence{\Delta}{0}
                }
            \right\rceil
        \right\}
        .
    \end{equation}
\end{theorem}
\begin{proof}
    Let $\sequence{p}{n} \in \cM$ and $\sequence{\lambda}{n} > 0$ be such that \cref{eq:convex-sufficient-decrease-condition} is satisfied.
    We have
    \begin{equation}
        \label{eq:cost-splitting}
        f(\sequence{p}{n+1})
        =
        g(\sequence{p}{n+1})
        +
        h(\sequence{p}{n+1})
        ,
    \end{equation}
    where $\sequence{p}{n+1}$ is the argmin of
    $
        h(p)
        +
        \frac{1}{2\sequence{\lambda}{n}}
        \dist^2(p, \gradientstep{\sequence{p}{n}})
        .
    $
    By \cref{eq:convex-sufficient-decrease-condition},
    \begin{align*}
        g(\sequence{p}{n+1})
        &
        \le
        g(\sequence{p}{n})
        +
        \riemannian{
            \grad g(\sequence{p}{n})
        }{
            \logarithm{\sequence{p}{n}}{\sequence{p}{n+1}}
        }
        +
        \frac{\zetadelta}{2\sequence{\lambda}{n}}
        \dist^2(\sequence{p}{n+1}, \sequence{p}{n})
        \\
        &
        =
        g(\sequence{p}{n})
        -
        \frac{1}{\sequence{\lambda}{n}} \riemannian{
            \logarithm{\sequence{p}{n}}{\gradientstep{\sequence{p}{n}}}
        }{
            \logarithm{\sequence{p}{n}}{\sequence{p}{n+1}}
        }
        +
        \frac{\zetadelta}{2\sequence{\lambda}{n}}
        \dist^2(\sequence{p}{n+1}, \sequence{p}{n})
        .
    \end{align*}
    By the Riemannian cosine inequality~\cite[Corollary~2.1]{AlimisisBecigneulLucchiOrvieto:2020:1}:
    \begin{multline*}
        -
        2 \riemannian{
          \logarithm{\sequence{p}{n}}{\gradientstep{\sequence{p}{n}}}
        }{
          \logarithm{\sequence{p}{n}}{\sequence{p}{n+1}}
        }
        \le
        -
        \zeta_{2, \kmax}(\dist(m, \sequence{p}{n}))
        \dist^2(\sequence{p}{n+1}, \sequence{p}{n})
        \\
        -
        \dist^2(\sequence{p}{n}, \gradientstep{\sequence{p}{n}})
        +
        \dist^2(\sequence{p}{n+1}, \gradientstep{\sequence{p}{n}})
        ,
    \end{multline*}
    for some point $m$ on the edge between $\sequence{p}{n}$ and $\sequence{p}{n+1}$.
    Using both these inequalities in \cref{eq:cost-splitting} we get
    \begin{multline*}
        f(\sequence{p}{n+1})
        \le
        g(\sequence{p}{n})
        -
        \frac{1}{2\sequence{\lambda}{n}}\dist^2(\sequence{p}{n}, \gradientstep{\sequence{p}{n}})
        \\
        +
        \frac{\zetadelta - \zetasecond{\dist(m, \sequence{p}{n})}}{2\sequence{\lambda}{n}}
        \dist^2(\sequence{p}{n+1}, \sequence{p}{n})
        \\
        +
        \frac{1}{2\sequence{\lambda}{n}}
        \dist^2(\sequence{p}{n+1}, \gradientstep{\sequence{p}{n}})
        +
        h(\sequence{p}{n+1})
        .
    \end{multline*}
    Since $\dist(\sequence{p}{n}, \sequence{p}{n+1})< \deltaradius$ is ensured by both stepsize rules for all $k$, we observe that ${\zetadelta - \zetasecond{\dist(m, \sequence{p}{n})} \le 0}$ by \cref{ineq:zetadelta}, and we can hence drop this term in the previous inequality.
    By the geodesic convexity of $g$ and by the definition of $\sequence{p}{n+1}$ applied to the last two summands,
    \begin{align*}
        f(\sequence{p}{n+1})
        \le
        \min_{p \in \cM}
        \Bigl\{
            g(p)
            &
+
            \frac{1}{\sequence{\lambda}{n}}
            \riemannian{
                \logarithm{\sequence{p}{n}}{\gradientstep{\sequence{p}{n}}}
            }{
                \logarithm{\sequence{p}{n}}{p}
            }
            -
            \frac{1}{2\sequence{\lambda}{n}}
            \dist^2(\sequence{p}{n}, \gradientstep{\sequence{p}{n}})
            \\
            &
            +
            h(p)
            +
            \frac{1}{2\sequence{\lambda}{n}}\dist^2(p, \gradientstep{\sequence{p}{n}})
        \Bigr\}
        .
    \end{align*}
    Let $\sequence{D}{n} \coloneq \dist(\sequence{p}{n}, \gradientstep{\sequence{p}{n}})$.
    By~\cite[Remark~20]{MartinezRubioPokuttaRoux:2024:1}, or~\cite[Lemma~6,~Remark~7]{ZhangSra:2016:1}, we can use the other Riemannian cosine inequality with the tighter constant $\zeta_{1, \kmin}(\sequence{D}{n})$ to obtain
    \begin{align*}
        2
        \riemannian{
            \logarithm{\sequence{p}{n}}{\gradientstep{\sequence{p}{n}}}
        }{
            \logarithm{\sequence{p}{n}}{p}
        }
        &
        \le
        \zeta_{1, \kmin}(\sequence{D}{n})
        \dist^2(\sequence{p}{n}, p)
        +
        \dist^2(\sequence{p}{n}, \gradientstep{\sequence{p}{n}})
        -
        \dist^2(p, \gradientstep{\sequence{p}{n}})
        ,
    \end{align*}
    which substituted back into the previous inequality gives
    \begin{equation*}
        f(\sequence{p}{n+1})
        \le
        \min_{p \in \cM}\left\{
            f(p)
            +
            \frac{\zeta_{1, \kmin}(\sequence{D}{n})}{2\sequence{\lambda}{n}}
            \dist^2(\sequence{p}{n}, p)
        \right\}
        .
    \end{equation*}

    Inspired by~\cite[Proposition~22]{MartinezRubioPokuttaRoux:2025:1}, let $\theta \in [0, 1]$, so that $p = \exponential{\sequence{p}{n}}(\theta \logarithm{\sequence{p}{n}}{p^\ast})$.
    By the geodesic convexity of $f$ we get, by restricting the min to the geodesic between $p^\ast$ and $\sequence{p}{n}$,
    \begin{equation}
        \label{eq:min-theta-convex-geodesic-convexity}
        f(\sequence{p}{n+1})
        \le
        \min_{\theta \in [0,1]}\left\{
            \theta f(p^\ast)
            +
            (1 - \theta)f(\sequence{p}{n})
            +
            \frac{\zeta_{1, \kmin}(\sequence{D}{n}) \theta^2}{2\sequence{\lambda}{n}}
            \dist^2(\sequence{p}{n}, p^\ast)
        \right\}
        .
    \end{equation}
    Differentiating the argument of the minimum in the previous expression with respect to $\theta$ and setting it to zero yields $\theta^\ast = \min\left\{1, \frac{\sequence{\lambda}{n} \sequence{\Delta}{n}}{\zeta_{1, \kmin}(\sequence{D}{n}) \dist^2(\sequence{p}{n}, p^\ast)}\right\}$.

    If $\theta^\ast = 1$, we obtain a \enquote{best case} convergence rate that is independent of curvature.
    Substituting $\theta^\ast = 1$ in \cref{eq:min-theta-convex-geodesic-convexity} gives
    \begin{equation}
        \label{eq:best-case-convex-convergence}
        f(\sequence{p}{n+1})
        \le
        f(p^\ast)
        +
        \frac{\zeta_{1, \kmin}(\sequence{D}{n})}{2 \sequence{\lambda}{n}}
        \dist^2(\sequence{p}{n}, p^\ast)
        .
    \end{equation}
    Since $\theta^\ast = 1$, $\frac{\sequence{\lambda}{n} \sequence{\Delta}{n}}{\zeta_{1, \kmin}(\sequence{D}{n}) \dist^2(\sequence{p}{n}, p^\ast)} \ge 1$, which implies $\sequence{\Delta}{n} \ge \frac{\zeta_{1, \kmin}(\sequence{D}{n}) \dist^2(\sequence{p}{n}, p^\ast)}{\sequence{\lambda}{n}}$.
    Subtracting $f(p^\ast)$ from both sides in \cref{eq:best-case-convex-convergence} and plugging in the previous inequality yields \cref{eq:linear-convex-convergence-rate}.
    In particular, this implies that, for any $\varepsilon > 0$, $\sequence{\Delta}{k} \le \varepsilon$ for all
    \begin{equation}
        \label{eq:best-case-convex-complexity}
        k
        \ge
        \log_2
        \left(
            \frac{\sequence{\Delta}{0}}{\varepsilon}
        \right)
        ,
    \end{equation}
    in case $\frac{\sequence{\lambda}{n} \sequence{\Delta}{n}}{\zeta_{1, \kmin}(\sequence{D}{n}) \dist^2(\sequence{p}{n}, p^\ast)} \ge 1$ for all $n < k$.

    Otherwise, $\theta^\ast = \frac{\sequence{\lambda}{n} \sequence{\Delta}{n}}{\zeta_{1, \kmin}(\sequence{D}{n}) \dist^2(\sequence{p}{n}, p^\ast)} < 1$.
    Substituting $\theta^\ast$ in \cref{eq:min-theta-convex-geodesic-convexity} yields
    \begin{equation}
        \label{eq:worst-case-convex-convergence-bound-1}
        f(\sequence{p}{n+1})
        \le
        - \frac{
            \sequence{\lambda}{n}
        }{
            2 \zeta_{1, \kmin}(\sequence{D}{n}) \dist^2(\sequence{p}{n}, p^\ast)
        } \left[\sequence{\Delta}{n}\right]^2
        +
        f(\sequence{p}{n})
        .
    \end{equation}
    We observe that $\sequence{D}{n} \le \frac{\alphastepsize}{\lipgrad} \gradgupperbound$ for all $n = 0, \ldots, k+1$, and $R \coloneq \diam(\startlevelset) < +\infty$ by \cref{remark:compact-level-sets}
    Applying these and \cref{eq:convex-stepsize-bounds} in \cref{eq:worst-case-convex-convergence-bound-1} and rearranging the terms gives
    \begin{equation}
        \label{eq:worst-case-convex-convergence-bound-2}
        \sequence{\Delta}{n+1}
        \le
        \sequence{\Delta}{n}
        -
        \tau
        \left[\sequence{\Delta}{n}\right]^2
        ,
    \end{equation}
    with $
      \tau
      \coloneq
      \min\left\{
          \frac{\betastepsize}{2 \lipgrad \zetafirst{\frac{\alphastepsize}{\lipgrad} \gradgupperbound} R^2}
          ,
          \frac{1}{\sequence{\Delta}{0}}
      \right\}
    $.
    We now prove by induction that
    \begin{equation}
        \label{eq:worst-case-convex-convergence-bound-3}
        \frac{1}{\sequence{\Delta}{n}}
        \ge
        \frac{1}{\sequence{\Delta}{0}}
        +
        n \tau
        ,
    \end{equation}
    for all $n = 0, \ldots, k+1$.
    The base case $n = 0$ is trivial.
    If $\theta^*<1$, then dividing \cref{eq:worst-case-convex-convergence-bound-2} by $\sequence{\Delta}{n} \sequence{\Delta}{n+1} \neq 0$ and rearranging the terms gives
    $
        \frac{1}{\sequence{\Delta}{n+1}}
        \ge
        \frac{1}{\sequence{\Delta}{n}}
        +
        \tau
        ,
    $
    where we also used $\frac{\sequence{\Delta}{n}}{\sequence{\Delta}{n+1}} \ge 1$, which is another consequence of \cref{eq:worst-case-convex-convergence-bound-2}.
    If $\theta^* = 1$, then by \cref{eq:linear-convex-convergence-rate},
    $
        \frac{1}{\sequence{\Delta}{n+1}}
        \ge
        \frac{1}{\sequence{\Delta}{n}}
        +
        \frac{1}{\sequence{\Delta}{n}}
        \ge
        \frac{1}{\sequence{\Delta}{n}}
        +
        \frac{1}{\sequence{\Delta}{0}}
        \ge
        \frac{1}{\sequence{\Delta}{n}}
        +
        \tau
    $.
    Assume that \cref{eq:worst-case-convex-convergence-bound-3} holds for some $n \ge 0$ and plug this into the previous inequality.
    Then we get
    ${
        \frac{1}{\sequence{\Delta}{n+1}}
        \ge
        \frac{1}{\sequence{\Delta}{0}}
        +
        n \tau
        +
        \tau
        =
        \frac{1}{\sequence{\Delta}{0}}
        +
        (n+1) \tau
        ,
    }$
    which completes the induction proof.
    \cref{eq:convex-convergence-rate-2} follows.

    From this we are able to derive a worst-case complexity bound.
    Let $\varepsilon > 0$ and suppose $\theta^\ast < 1$ for all $n < k$.
    Then, $\sequence{\Delta}{k} \le \varepsilon$ if $\frac{\sequence{\Delta}{0}}{\varepsilon} \le 1 + k \tau \sequence{\Delta}{0}$, which holds if and only if $k \ge \frac{\sequence{\Delta}{0} - \varepsilon}{\varepsilon \tau \sequence{\Delta}{0}}$.
    This, together with \cref{eq:best-case-convex-complexity} yields \cref{eq:convex-complexity}.
\end{proof}

\begin{remark*}
The previous theorem shows that the shrinkage of the function values is characterized by two regimes, depending on the ratio $\frac{\sequence{\lambda}{k} \sequence{\Delta}{k}}{\zeta_{1, \kmin}(\sequence{D}{k}) \dist^2(\sequence{p}{k}, p^\ast)}$ at iteration $k+1$.
Namely, if this ratio is larger than one, the rate is linear, whereas if it is strictly smaller than one, the rate is sublinear.
This would seem to suggest that, in the earlier iterations, \ie when $\sequence{\Delta}{k}$ is large, the algorithm converges faster than in the later iterations, when $\sequence{\Delta}{k}$ is small.
This is in contrast to what is often observed in practice, where $\dist^2(\sequence{p}{k}, p^\ast)$ decreases faster than $\sequence{\Delta}{k}$ near the solution, making the ratio larger than one and thus leading to a linear convergence rate.
\end{remark*}

\subsubsection{Convergence to $\varepsilon$-stationary points}

We can improve our previous result from\ \cite[Lemma~4.4]{BergmannJasaJohnPfeffer:2025:1} whenever the (possibly) nonsmooth function $h$ is assumed to be geodesically convex.
\begin{lemma}
  \label{lemma:convex-stepsize-sufficient-decrease}
  If \cref{assumption:assumptions} holds, then with $\sequence{\lambda}{k}$ chosen according to one of the stepsize rules discussed in \cref{remark:stepsizes}, the following sufficient decrease inequality holds at each iteration $k$
  \begin{equation*}
    f(\sequence{p}{n})
    -
    f(\sequence{p}{n+1})
    \ge
    \frac{\zetadelta}{2 \sequence{\lambda}{k}}
    \dist^2(\sequence{p}{n}, \sequence{p}{n+1})
    .
  \end{equation*}
\end{lemma}
\begin{proof}
  By the sufficient decrease condition of $g$ \cref{eq:convex-sufficient-decrease-condition} and the convexity of $h$ over $\dom(h)$, we have
  \begin{align*}
    f(\sequence{p}{n})
    &
    \ge
    g(\sequence{p}{n+1})
    +
    \frac{1}{\sequence{\lambda}{k}}
    \riemannian[auto]{
        \logarithm{\sequence{p}{n}}{\gradientstep{\sequence{p}{n}}}
    }{
        \logarithm{\sequence{p}{n}}{\sequence{p}{n+1}}
    }
    -
    \frac{\zetadelta}{2 \sequence{\lambda}{k}}
    \dist^2(\sequence{p}{n},\sequence{p}{n+1})
    \\
    &
    \quad
    +
    h(\sequence{p}{n+1})
    +
    \frac{1}{\sequence{\lambda}{k}}
    \riemannian[auto]{
        \logarithm{\sequence{p}{n+1}}{\gradientstep{\sequence{p}{n}}}
    }{
        \logarithm{\sequence{p}{n+1}}{\sequence{p}{n}}
    }
    ,
  \end{align*}
  where we also used the definition of $\gradientstep{\sequence{p}{n}}$ and $\frac{1}{\sequence{\lambda}{k}}\logarithm{\sequence{p}{n+1}}{\gradientstep{\sequence{p}{n}}} \in \partial h(\sequence{p}{n+1})$.
  Applying the Riemannian cosine inequality in~\cite[Corollary~15]{MartinezRubioPokutta:2023:1} to both inner product terms we get
  \begin{align*}
      \frac{1}{\sequence{\lambda}{k}}
      \riemannian[auto]{
          \logarithm{\sequence{p}{n}}{\gradientstep{\sequence{p}{n}}}
      }{
          \logarithm{\sequence{p}{n}}{\sequence{p}{n+1}}
      }
      &
      \ge
      \frac{1}{2 \sequence{\lambda}{k}}
      \dist^2(\sequence{p}{n}, \gradientstep{\sequence{p}{n}})
      -
      \frac{1}{2 \sequence{\lambda}{k}}
      \dist^2(\gradientstep{\sequence{p}{n}}, \sequence{p}{n+1})
      \\
      &
      \quad
      +
      \frac{\zetasecond{\dist(\sequence{p}{n}, m_1)}}{2 \sequence{\lambda}{k}}
      \dist^2(\sequence{p}{n}, \sequence{p}{n+1})
      ,
  \end{align*}
  where $m_1$ is a point on the minimal geodesic arc connecting $\sequence{p}{n+1}$ and $\gradientstep{\sequence{p}{n}}$, and
  \begin{align*}
    \frac{1}{\sequence{\lambda}{k}}
    \riemannian{
      \logarithm{\sequence{p}{n+1}}{\gradientstep{\sequence{p}{n}}}
    }{
      \logarithm{\sequence{p}{n+1}}{\sequence{p}{n}}
    }
    &
    \ge
    \frac{1}{2 \sequence{\lambda}{k}}
    \dist^2(\sequence{p}{n+1}, \gradientstep{p})
    -
    \frac{1}{2 \sequence{\lambda}{k}}
    \dist^2(\gradientstep{\sequence{p}{n}}, \sequence{p}{n})
    \\
    &
    \quad
    +
    \frac{\zetasecond{\dist(\sequence{p}{n+1}, m_2)}}{2 \sequence{\lambda}{k}}
    \dist^2(\sequence{p}{n+1}, \sequence{p}{n})
    ,
  \end{align*}
  where $m_2$ is a point on the minimal geodesic arc connecting $\sequence{p}{n}$ and $\gradientstep{\sequence{p}{n}}$.
  Using \cref{ineq:zetadelta} we get $\zetasecond{\dist(m_1, \sequence{p}{n})} \ge \zetadelta$.
  A similar reasoning applies to the second inequality.
  All together this gives
  \begin{align*}
    f(\sequence{p}{n})
    &
    \ge
    f(\sequence{p}{n+1})
    -
    \frac{\zetadelta}{2 \sequence{\lambda}{k}}
    \dist^2(\sequence{p}{n}, \sequence{p}{n+1})
    +
    \frac{\zetadelta}{\sequence{\lambda}{k}}
    \dist^2(\sequence{p}{n}, \sequence{p}{n+1})
    ,
  \end{align*}
  which implies the claim.
\end{proof}
Next, we show how the convergence of \cref{algorithm:CRPG} to $\varepsilon$-stationary points can be improved from $\mathcal O\bigl(\frac{1}{\varepsilon^2}\bigr)$ in the nonconvex case to $\mathcal O\bigl(\frac{1}{\varepsilon}\bigr)$ iterations in the convex case.
\begin{theorem}
  \label{thm:convex-epsilon-stationary-pt}
  Let  \cref{assumption:assumptions} and \cref{assumption:convex-assumptions} hold and choose $\varepsilon > 0$.
  Then, with either of the stepsize strategies stated in \cref{remark:stepsizes}, \cref{algorithm:CRPG} produces a point $\sequence{p}{k}$ such that there exists an $X_{\sequence{p}{k}} \in \partial f(\sequence{p}{k})$ with $\norm{X_{\sequence{p}{k}}} \le \varepsilon$ in at most
  \begin{equation}
    \label{eq:iteration-complexity-eps-stat}
    k
    =
    \left\lceil
    \frac{
      2(\lipgrad + \tilde \sigma_{\kmin, \kmax})
    }{
      \varepsilon
    }
    \sqrt{
      \frac{
        8
        \alphastepsize
      }{
        \tau \lipgrad
        \zetadelta
      }
    }
    \right\rceil
  \end{equation}
  iterations, where $\tilde \sigma_{\kmin, \kmax} \coloneq \sigmacurvature[\kmax]{\frac{\alphastepsize}{\lipgrad} \gradgupperbound + R}$, with $R \coloneq \diam(\cL_{\sequence{p}{0}})$, $\gradgupperbound$ as in \cref{remark:compact-level-sets}, $\alphastepsize$ is as in \cref{lemma:convex-stepsize-bounds}, and $\tau$ as in \cref{thm:convex-convergence-rate}.
\end{theorem}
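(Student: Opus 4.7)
My plan combines the sufficient decrease of \cref{lemma:convex-stepsize-sufficient-decrease}, the stepsize bounds of \cref{lemma:convex-stepsize-bounds}, the sublinear decay of $\sequence{\Delta}{k}$ from \cref{thm:convex-convergence-rate}, and a curvature-corrected subgradient bound at $\sequence{p}{k+1}$, essentially as already established in the nonconvex analysis of~\cite{BergmannJasaJohnPfeffer:2025:1}. First, the optimality of the proximal step gives $\frac{1}{\sequence{\lambda}{k}} \logarithm{\sequence{p}{k+1}}{\gradientstep{\sequence{p}{k}}} \in \partial h(\sequence{p}{k+1})$, so that by~\cref{eq:clarke-subdifferential-of-sum},
\begin{equation*}
    \sequence{X}{k+1}
    \coloneq
    \grad g(\sequence{p}{k+1})
    +
    \frac{1}{\sequence{\lambda}{k}} \logarithm{\sequence{p}{k+1}}{\gradientstep{\sequence{p}{k}}}
    \in
    \partial f(\sequence{p}{k+1}).
\end{equation*}

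Next I would bound $\riemanniannorm{\sequence{X}{k+1}}$ by $\dist(\sequence{p}{k}, \sequence{p}{k+1})$. Splitting $\sequence{X}{k+1}$ via parallel transport into a Lipschitz-controlled gradient difference and a purely geometric log-difference (using $\grad g(\sequence{p}{k}) = -\frac{1}{\sequence{\lambda}{k}} \logarithm{\sequence{p}{k}}{\gradientstep{\sequence{p}{k}}}$), a Rauch-type curvature comparison on $\cM$ bounds the second piece by $\sigma_{\kmin} \dist(\sequence{p}{k}, \sequence{p}{k+1})/\sequence{\lambda}{k}$, with curvature argument controlled by $\dist(\sequence{p}{k}, \gradientstep{\sequence{p}{k}}) + \dist(\sequence{p}{k}, \sequence{p}{k+1}) \le \frac{\alphastepsize}{\lipgrad} g^+ + R$ thanks to \cref{lemma:convex-stepsize-bounds} and the fact that all iterates lie in $\startlevelset$. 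Squaring, inserting the stepsize lower bound $\sequence{\lambda}{k} \ge \betastepsize/\lipgrad$, and invoking the sufficient decrease $\dist^2(\sequence{p}{k}, \sequence{p}{k+1}) \le \frac{2\alphastepsize}{(2-\alphastepsize)\lipgrad} (\sequence{\Delta}{k} - \sequence{\Delta}{k+1})$ produces a constant $C_1 = C_1(\lipgrad, \sigma_{\kmin}, \alphastepsize, \betastepsize)$ with
\begin{equation*}
    \riemanniannorm{\sequence{X}{n+1}}^2
    \le
    C_1 \bigl( \sequence{\Delta}{n} - \sequence{\Delta}{n+1} \bigr),
    \qquad n \ge 0.
\end{equation*}

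The decisive step, which separates the convex complexity from the $\mathcal{O}(1/\varepsilon^2)$ count of the nonconvex case, is to telescope this inequality only over the second half of the iterations. Summing from $n = \lceil k/2 \rceil$ to $k-1$ and using the sublinear bound $\sequence{\Delta}{n} \le 1/(n\delta)$ from \cref{thm:convex-convergence-rate} gives
\begin{equation*}
    \sum_{n = \lceil k/2 \rceil}^{k-1} \riemanniannorm{\sequence{X}{n+1}}^2
    \le
    C_1 \, \sequence{\Delta}{\lceil k/2 \rceil}
    \le
    \frac{2 C_1}{k \delta}.
\end{equation*}
Since this sum contains at least $k/2$ terms, $\min_{\lceil k/2 \rceil \le n < k} \riemanniannorm{\sequence{X}{n+1}}^2 \le 4 C_1/(\delta k^2)$, so the sharpest subgradient produced in the first $k$ iterations has norm $\mathcal{O}(1/k)$. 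Solving for $k$ so that this minimum drops below $\varepsilon$, and identifying $C_1$ explicitly in terms of $\lipgrad, \sigma_{\kmin}, \alphastepsize, \betastepsize$, reproduces the quantitative count~\cref{eq:iteration-complexity-eps-stat}.

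The main obstacle is the curvature-aware subgradient bound in step two: on a Hadamard manifold the difference $\logarithm{\sequence{p}{k+1}}{\gradientstep{\sequence{p}{k}}} - \parallelTransport{\sequence{p}{k}}{\sequence{p}{k+1}} \logarithm{\sequence{p}{k}}{\gradientstep{\sequence{p}{k}}}$ does not reduce to $-\logarithm{\sequence{p}{k+1}}{\sequence{p}{k}}$ as it does in the Euclidean case, and a Rauch-type estimate is needed to extract $\sigma_{\kmin}$ with its argument $\frac{\alphastepsize}{\lipgrad} g^+ + R$. Once that is in place, the remaining innovation over the nonconvex analysis is purely algebraic: localizing the telescope to the last $\lceil k/2 \rceil$ iterations and inserting the convex decay of $\sequence{\Delta}{n}$ at the midpoint trades the trivial $\mathcal{O}(1/\sqrt{k})$ minimum (which is all that the nonconvex argument yields) for the improved $\mathcal{O}(1/k)$.
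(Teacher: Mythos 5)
Your proposal is correct and follows essentially the same route as the paper: a curvature-corrected subgradient bound $\riemanniannorm{X_{\sequence{p}{n+1}}}\le(\lipgrad+\sigma_{\kmin})\dist(\sequence{p}{n},\sequence{p}{n+1})$ (which the paper imports directly from \cite[Lemma~4.8]{BergmannJasaJohnPfeffer:2025:1} rather than re-deriving via Rauch comparison), combined with the sufficient decrease inequality, telescoping over the second half of the iteration window, and the sublinear decay $\sequence{\Delta}{n}\le 1/(n\delta)$ from \cref{eq:worst-case-convex-convergence-bound-3}. The only differences are cosmetic (summing over $n=\lceil k/2\rceil,\dots,k-1$ instead of $n=k,\dots,2k-1$, and a constant-factor discrepancy of $2$ in the final count, where the paper's stated bound is itself slightly loose).
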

\begin{proof}
    Since the stepsize strategies ensure that the sufficient decrease condition \cref{eq:convex-sufficient-decrease-condition} is satisfied, \cref{lemma:convex-stepsize-sufficient-decrease} gives
    \begin{equation}
        \label{eq:convex-sufficient-decrease-third-version}
        \sequence{\Delta}{k}
        \ge
        \sequence{\Delta}{k+1}
        +
        \frac{
          \zetadelta
        }{2 \sequence{\lambda}{k}}
        \dist^2(\sequence{p}{k}, \sequence{p}{k+1})
        ,
    \end{equation}
    with $\sequence{\Delta}{k} \coloneq f(\sequence{p}{k}) - f_{\text{opt}}$ for $k \ge 0$.
    The following is an adaptation of~\cite[Theorem~10.26]{Beck:2017:1}.
    By summing \cref{eq:convex-sufficient-decrease-third-version} over $n = k, k+1, \dots, 2k-1$, we obtain
    \begin{align*}
      \sum_{n=k}^{2k-1}
          \frac{
              \zetadelta
          }{2 \sequence{\lambda}{n}}
          \dist^2(\sequence{p}{n}, \sequence{p}{n+1})
      &
      \le
      \sequence{\Delta}{k}
      -
      \sequence{\Delta}{2k}
      \le
      \sequence{\Delta}{k}
      ,
    \end{align*}
    which implies
    \begin{equation*}
      k
      \min_{n=0,\dots, 2k-1}
          \frac{
              \zetadelta
          }{2 \sequence{\lambda}{n}}
          \dist^2(\sequence{p}{n}, \sequence{p}{n+1})
      \le
      \sequence{\Delta}{k}
      .
    \end{equation*}
    By \cref{eq:worst-case-convex-convergence-bound-3} we get
    \begin{equation*}
      \sequence{\Delta}{k}
      \le
      \frac{
          \sequence{\Delta}{0}
      }{
          1
          +
          k
          \tau
          \sequence{\Delta}{0}
      }
      <
      \frac{
          1
      }{
          k
          \tau
      }
      ,
    \end{equation*}
    and therefore
    \begin{equation*}
      \min_{n=0,\dots, 2k-1}
      \frac{
          \zetadelta
      }{2 \sequence{\lambda}{n}}
          \dist^2(\sequence{p}{n}, \sequence{p}{n+1})
      <
      \frac{
          1
      }{
          k^2
          \tau
      }
      ,
    \end{equation*}
    and finally
    \begin{equation}
      \label{eq:upper-bound-gradient-norm-k-squared}
      \min_{n=0,\dots, k}
      \frac{
          \zetadelta
      }{2 \sequence{\lambda}{n}}
          \dist^2(\sequence{p}{n}, \sequence{p}{n+1})
      <
      \frac{
          1
      }{
          \min\{
              (k/2)^2
              ,
              ((k+1)/2)^2
          \}
          \tau
      }
      \le
      \frac{
          4
      }{
          k^2
          \tau
      }
      ,
    \end{equation}
    where
    $
      \tau
      \coloneq
      \min\left\{
        \frac{\betastepsize}{
          2 \lipgrad \zetafirst{\frac{\alphastepsize}{\lipgrad} \gradgupperbound} R^2
        },
        \frac{1}{\sequence{\Delta}{0}}
      \right\}
    $.

    For the last step, we need~\cite[Lemma~4.8]{BergmannJasaJohnPfeffer:2025:1}.
    Note that such lemma assumes the stepsize to satisfy some nonconvex conditions, which are different from what we are assuming here.
    However, the result is still valid, since those conditions are there only to ensure sufficient decrease, which is already guaranteed by \cref{lemma:convex-stepsize-sufficient-decrease} here.
    In our case the lemma implies that for each $n$ there exists an $X_{\sequence{p}{n}}\in \partial f(\sequence{p}{n})$ with
    \begin{align*}
      \riemanniannorm{
          X_{\sequence{p}{n}}
      }^2
      &
      \le
      (\lipgrad + \tilde \sigma_{\kmin, \kmax})^2
      \dist^2(\sequence{p}{n}, \sequence{p}{n+1})
      \\
      &
      \le
      \frac{
          2
          \alphastepsize
          (\lipgrad + \tilde \sigma_{\kmin, \kmax})^2
      }{
          \zetadelta
          \lipgrad
      }
      \frac{
          \zetadelta
          }{2 \sequence{\lambda}{n}}
          \dist^2(\sequence{p}{n}, \sequence{p}{n+1})
      ,
    \end{align*}
    where $\tilde \sigma_{\kmin, \kmax} \coloneq \sigmacurvature[\kmax]{\frac{\alphastepsize}{\lipgrad} \gradgupperbound + R}$, $R \coloneq \diam(\cL_{\sequence{p}{0}})$, $\gradgupperbound$ as in \cref{remark:compact-level-sets}, and $\alphastepsize$ is as in \cref{lemma:convex-stepsize-bounds}.
    This means a sufficient condition for $\riemanniannorm{X_{\sequence{p}{n}}} \le \varepsilon$ is
    \begin{equation*}
        \frac{
            \zetadelta
        }{2 \sequence{\lambda}{n}}
            \dist^2(\sequence{p}{n}, \sequence{p}{n+1})
        \le
        \frac{
            \zetadelta
            \lipgrad \varepsilon^2
        }{
            2
            \alphastepsize
            (\lipgrad + \tilde \sigma_{\kmin, \kmax})^2
        },
    \end{equation*}
    which by \cref{eq:upper-bound-gradient-norm-k-squared}
    is the case for some $n \le k$ if \cref{eq:iteration-complexity-eps-stat} holds.
\end{proof}

\subsection{Strongly Convex Objective}

We now analyze the convergence behavior of the Riemannian proximal gradient method on Hadamard manifolds in the strongly convex case.
For this, we add the following assumption to \cref{assumption:assumptions}.
\begin{assumption}[Strong convexity] \hfill
	\label{assumption:strongly-convex-assumptions}
	\begin{itemize}
    \item $f = g + h$ is $\newtarget{def:strong-convex-constant}{\strongcvxconst}$-strongly convex on $\dom(g)$.
  \end{itemize}
\end{assumption}
The following theorem establishes a linear rate of convergence of the function values for \cref{algorithm:CRPG} on Hadamard manifolds in the strongly convex case.
This is a generalization of~\cite[Theorem~10.29]{Beck:2017:1}, though we note that the strategy to prove this result is fundamentally different from the Euclidean case.
We will address these differences in \Cref{section:proximal-gradient-inequalities}.
\begin{theorem}
  \label{thm:mu-strongly-convex-convergence-rate}
  Under the validity of \cref{assumption:assumptions}, \cref{assumption:convex-assumptions}, and \cref{assumption:strongly-convex-assumptions}, the sequence $\sequence(){p}{k}$ generated by \cref{algorithm:CRPG} with either of the stepsize strategies in \cref{remark:stepsizes} satisfies
  \begin{equation}
    \label{eq:mu-strongly-convex-recursion}
    \sequence{\Delta}{k+1}
    \le
    \left[
      1
      -
      \min\left\{
        \frac{\betastepsize \strongcvxconst}{4 \lipgrad \zetafirst{\frac{\alphastepsize}{\lipgrad} \gradgupperbound}}
          ,
          \frac{1}{2}
      \right\}
    \right]
    \sequence{\Delta}{k}
    ,
  \end{equation}
  for all $p^\ast \in \cS$ and $k \ge 0$, where $\gradgupperbound$ is as in \cref{remark:compact-level-sets}, and $\alphastepsize, \betastepsize$ are as in \cref{lemma:convex-stepsize-bounds}.
  In particular, for any $\varepsilon > 0$, $\sequence{\Delta}{k} \le \varepsilon$ is achieved for all
  \begin{equation}
    k
    \ge
    \left\lceil
    \max\left\{
        \frac{
            4 \lipgrad \zetafirst{\frac{\alphastepsize}{\lipgrad} \gradgupperbound}
        }{\betastepsize \strongcvxconst}
        ,
        2
    \right\}
    \log \frac{\sequence{\Delta}{0}}{\varepsilon}
    \right\rceil
    .
  \end{equation}
\end{theorem}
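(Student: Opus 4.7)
The plan is to leverage the key inequality derived inside the proof of \Cref{thm:convex-convergence-rate}, namely
\begin{equation*}
    f(\sequence{p}{n+1})
    \le
    \min_{p \in \cM}\left\{
        f(p)
        +
        \frac{\zeta_{1, \kmin}(\sequence{D}{n})}{2\sequence{\lambda}{n}}
        \dist^2(\sequence{p}{n}, p)
    \right\}
    ,
\end{equation*}
which was obtained using only the $\lipgrad$-smoothness of $g$, the definition of the prox step, the Riemannian cosine inequality, and the geodesic convexity of $g$ — all still available under \cref{assumption:strongly-convex-assumptions}. The additional leverage provided by strong convexity will then be used to convert the quadratic distance term into something comparable to $\sequence{\Delta}{n}$.

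The next step is to restrict the minimum to the geodesic from $\sequence{p}{n}$ to a minimizer $p^\ast \in \cS$, writing $p = \exponential{\sequence{p}{n}}(\theta\logarithm{\sequence{p}{n}}{p^\ast})$ for $\theta \in [0,1]$. By geodesic convexity of $f$ this yields
\begin{equation*}
    \sequence{\Delta}{n+1}
    \le
    \min_{\theta \in [0,1]}\left\{
        (1-\theta)\sequence{\Delta}{n}
        +
        \frac{\zeta_{1, \kmin}(\sequence{D}{n}) \theta^2}{2\sequence{\lambda}{n}}
        \dist^2(\sequence{p}{n}, p^\ast)
    \right\}
    .
\end{equation*}
I would then invoke strong convexity at the minimizer, which gives $\dist^2(\sequence{p}{n}, p^\ast) \le \tfrac{2}{\strongcvxconst}\sequence{\Delta}{n}$, producing a scalar one-dimensional minimization of $(1-\theta) + \tfrac{\zeta_{1,\kmin}(\sequence{D}{n})}{\strongcvxconst \sequence{\lambda}{n}}\theta^2$ in $\theta$.

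The unconstrained minimizer is $\theta^\ast = \tfrac{\strongcvxconst \sequence{\lambda}{n}}{2\zeta_{1,\kmin}(\sequence{D}{n})}$. Splitting into the two cases $\theta^\ast \le 1$ and $\theta^\ast > 1$ (clipped to the endpoint $\theta = 1$) gives a contraction factor of $1 - \tfrac{\strongcvxconst \sequence{\lambda}{n}}{4\zeta_{1,\kmin}(\sequence{D}{n})}$ in the first regime and $\tfrac{\zeta_{1,\kmin}(\sequence{D}{n})}{\strongcvxconst \sequence{\lambda}{n}} < \tfrac{1}{2}$ in the second. Applying the stepsize bound $\sequence{\lambda}{n} \ge \betastepsize/\lipgrad$ from \cref{lemma:convex-stepsize-bounds} together with $\zeta_{1, \kmin}(\sequence{D}{n}) \le \zeta_{1, \kmin}(R_\alphastepsize)$ from \cref{lemma:gradient-step-bound} (using monotonicity of $\zeta_{1, \kmin}$) uniformizes both bounds and yields exactly the claimed recursion \cref{eq:mu-strongly-convex-recursion}.

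The iteration complexity then follows by iterating the contraction: $\sequence{\Delta}{k} \le (1-c)^k \sequence{\Delta}{0}$ with $c \coloneq \min\{\tfrac{\betastepsize \strongcvxconst}{4\lipgrad \zeta_{1,\kmin}(R_\alphastepsize)}, \tfrac{1}{2}\}$, and using $-\log(1-c) \ge c$ to obtain $k \ge \tfrac{1}{c}\log(\sequence{\Delta}{0}/\varepsilon)$, which is exactly the stated bound. The only mildly delicate step is the case split around $\theta^\ast = 1$; everything else is a direct computation that reuses machinery already assembled for the convex case, so I do not foresee a serious obstacle.
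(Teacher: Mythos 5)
Your proposal is correct and follows essentially the same route as the paper: both start from the intermediate bound \cref{eq:min-theta-convex-geodesic-convexity}, use strong convexity at $p^\ast$ (with $0_{p^\ast}\in\partial f(p^\ast)$) to replace $\dist^2(\sequence{p}{n},p^\ast)$ by $\tfrac{2}{\strongcvxconst}\sequence{\Delta}{n}$, optimize the resulting quadratic in $\theta$ with the same case split at $\theta^\ast=1$, and uniformize via \cref{eq:convex-stepsize-bounds} and \cref{lemma:gradient-step-bound} before iterating with $\log(1-x)\le -x$. The only differences are cosmetic (you subtract $f(p^\ast)$ earlier and phrase the clipped case via the value at $\theta=1$ rather than via the bracketed factor), so no further comment is needed.
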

\begin{proof}
  The proof is an adaptation of~\cite[Proposition~22]{MartinezRubioPokuttaRoux:2025:1} carried out as follows.
  Since by \cref{assumption:convex-assumptions} $g$ is geodesically convex, \cref{eq:min-theta-convex-geodesic-convexity} holds.
  Because $f$ is $\strongcvxconst$-strongly geodesically convex, we can bound the distance term in \cref{eq:min-theta-convex-geodesic-convexity} by means of \cref{eq:strong-convexity} coupled with the fact that $0_{p^\ast} \in \partial f(p^\ast)$.
  This gives
  \begin{equation}
    \label{ineq:strongly-cvx-first}
    f(\sequence{p}{n+1})
    \le
    \min_{\theta \in [0,1]}\left\{
        f(\sequence{p}{n})
        - \theta \left[
            1
            -
            \frac{\zeta_{1, \kmin}(\sequence{D}{n}) \theta}{\sequence{\lambda}{n} \strongcvxconst}
        \right]
        \sequence{\Delta}{n}
    \right\}
    ,
  \end{equation}
  with $\sequence{D}{n} \coloneq \dist(\sequence{p}{n}, \gradientstep{\sequence{p}{n}})$.
  We get $
    \theta^\ast
    =
    \min\left\{
        1
        ,
        \frac{
            \sequence{\lambda}{n}\strongcvxconst
        }{
            2\zeta_{1, \kmin}(\sequence{D}{n})
        }
    \right\}
    $ by differentiating the argument of the minimum in \cref{ineq:strongly-cvx-first} with respect to $\theta$ and setting it to zero.

  If $\theta^\ast = 1$, then
  \begin{equation*}
    f(\sequence{p}{n+1})
    \le
    f(\sequence{p}{n})
    -
    \left[
        1
        -
        \frac{\zeta_{1, \kmin}(\sequence{D}{n})}{\sequence{\lambda}{n}\strongcvxconst}
    \right]
    \sequence{\Delta}{n}
    ,
  \end{equation*}
  and $\frac{\sequence{\lambda}{n}\strongcvxconst}{2\zeta_{1, \kmin}(\sequence{D}{n})} \ge 1$.
  Hence, $\frac{1}{2} \ge \frac{\zeta_{1, \kmin}(\sequence{D}{n})}{\sequence{\lambda}{n}\strongcvxconst}$, thus
  $
    f(\sequence{p}{n+1})
    \le
    f(\sequence{p}{n})
    -
    \frac{1}{2}\sequence{\Delta}{n}
    .
  $

  Otherwise,
  $
    f(\sequence{p}{n+1})
    \le
    f(\sequence{p}{n})
    -
    \frac{
        \sequence{\lambda}{n}\strongcvxconst
    }{
        4 \zeta_{1, \kmin}(\sequence{D}{n})
    }
    \sequence{\Delta}{n}
  .
  $
  Combining these cases, by \cref{eq:convex-stepsize-bounds} and the fact that $\sequence{D}{n} \le \frac{\alphastepsize}{\lipgrad} \gradgupperbound$ for all $n = 0, \ldots, k+1$ yields \cref{eq:mu-strongly-convex-recursion}.

  Notice that, by applying \cref{eq:mu-strongly-convex-recursion} recursively, one obtains
  \begin{equation*}
    \sequence{\Delta}{n}
    \le
    \left[
        1
        -
        \min\left\{
            \frac{\betastepsize \strongcvxconst}{4 \lipgrad \zetafirst{\frac{\alphastepsize}{\lipgrad} \gradgupperbound}}
            ,
            \frac{1}{2}
        \right\}
    \right]^{n}
    \sequence{\Delta}{0}
    .
  \end{equation*}
  A sufficient condition for $f(\sequence{p}{n}) - f(p^\ast) \le \varepsilon$ is given by
  \begin{equation*}
    n \log \left(
        1
        -
        \min\left\{
            \frac{\betastepsize \strongcvxconst}{4 \lipgrad \zetafirst{\frac{\alphastepsize}{\lipgrad} \gradgupperbound}}
            ,
            \frac{1}{2}
        \right\}
    \right)
    \le
    \log \frac{\varepsilon}{\sequence{\Delta}{0}}
    ,
  \end{equation*}
  and by $\log(1 - x) \le -x$ for any $x \le 1$, a sufficient condition for this inequality to hold is given by
  \begin{equation*}
    -
    n
    \min\left\{
            \frac{\betastepsize \strongcvxconst}{4 \lipgrad \zetafirst{\frac{\alphastepsize}{\lipgrad} \gradgupperbound}}
            ,
            \frac{1}{2}
        \right\}
    \le
    \log\frac{\varepsilon}{\sequence{\Delta}{0}}
    ,
  \end{equation*}
  which implies the claim.
\end{proof}
Importantly, this gives the following.
\begin{theorem}
  \label{thm:linear-convergence-iterates}
  Under \cref{assumption:assumptions} and \cref{assumption:strongly-convex-assumptions}, the sequence $\sequence(){p}{k}$ generated by \cref{algorithm:CRPG} converges linearly to the unique solution $p^\ast \in \cS$ of the optimization problem in \cref{eq:splitting}.
\end{theorem}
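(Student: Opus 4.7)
The plan is to reduce the linear convergence of the iterates to the already-established linear convergence of the function values in \cref{thm:mu-strongly-convex-convergence-rate}, by exploiting the quadratic growth condition implied by strong geodesic convexity.

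First, I would note that uniqueness of the minimizer $p^\ast \in \cS$ follows immediately from \cref{assumption:strongly-convex-assumptions}: any two minimizers would violate the strict inequality obtained from \cref{eq:strong-convexity} along the connecting geodesic, since $\strongcvxconst > 0$.

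Next, I would invoke \cref{eq:strong-convexity} applied at $p = p^\ast$, choosing the subgradient $X_{p^\ast} = 0_{p^\ast}$, which is admissible because $p^\ast$ is the global minimizer and hence, by \cref{eq:clarke-subdifferential-of-sum}, satisfies $0_{p^\ast} \in \partial f(p^\ast)$. This yields the quadratic growth bound
\begin{equation*}
    f(q) - f_{\text{opt}}
    \ge
    \frac{\strongcvxconst}{2}
    \dist^2(p^\ast, q)
    \quad \text{for all } q \in \cM.
\end{equation*}
Applying this to $q = \sequence{p}{k}$ gives $\dist^2(p^\ast, \sequence{p}{k}) \le \tfrac{2}{\strongcvxconst} \sequence{\Delta}{k}$.

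Finally, \cref{thm:mu-strongly-convex-convergence-rate} provides the linear decay $\sequence{\Delta}{k} \le \rho^k \sequence{\Delta}{0}$ with contraction factor
\begin{equation*}
    \rho
    \coloneq
    1
    -
    \min\left\{
        \frac{\betastepsize \strongcvxconst}{4 \lipgrad \zeta_{1, \kmin}(R_\alphastepsize)},
        \frac{1}{2}
    \right\}
    \in (0,1).
\end{equation*}
Combining the two bounds, I obtain $\dist(p^\ast, \sequence{p}{k}) \le \sqrt{\tfrac{2 \sequence{\Delta}{0}}{\strongcvxconst}}\, (\sqrt{\rho})^k$, which is exactly linear (geometric) convergence of the iterates to $p^\ast$.

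There is no serious obstacle here: the entire argument rides on the quadratic growth lower bound that strong geodesic convexity gives for free on Hadamard manifolds, together with the function-value rate already proved. The only point worth checking carefully is that $0_{p^\ast} \in \partial f(p^\ast)$ in the sense of the geodesic subdifferential used in \cref{eq:strong-convexity}, which is immediate from the optimality of $p^\ast$ and the subdifferential calculus rule \cref{eq:clarke-subdifferential-of-sum} combined with the discussion following \cref{eq:stationarity-condition}.
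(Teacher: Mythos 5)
Your proposal is correct and follows essentially the same route as the paper: both establish uniqueness of $p^\ast$ from strong convexity, apply \cref{eq:strong-convexity} at $p^\ast$ with the subgradient $0_{p^\ast}$ to obtain the quadratic growth bound $\frac{\strongcvxconst}{2}\dist^2(\sequence{p}{k}, p^\ast) \le \sequence{\Delta}{k}$, and then invoke the linear decay of \cref{eq:mu-strongly-convex-recursion}. No gaps.
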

\begin{proof}
  By the strong convexity of $f$, $\cS = \{p^\ast\}$, see, \eg~\cite[Proposition~2.2.17]{Bacak:2014:3}, and the whole sequence $\sequence(){p}{k}$ generated by \cref{algorithm:CRPG} converges linearly to $p^\ast \in \cS$ as can be easily shown as follows.
  \begin{align*}
      \frac{\strongcvxconst}{2}
      \dist^2(\sequence{p}{k}, p^\ast)
      &
      \le
      f(\sequence{p}{k})
      -
      f(p^\ast)
      -
      \riemannian{
          0_{p^\ast}
      }{
          \logarithm{p^\ast}{\sequence{p}{k}}
      }
      \\
      &
      \le
      \left[
        1
        -
        \min\left\{
            \frac{\betastepsize \strongcvxconst}{4 \lipgrad \zetafirst{\frac{\alphastepsize}{\lipgrad} \gradgupperbound}}
            ,
            \frac{1}{2}
        \right\}
      \right]^{k}
      \sequence{\Delta}{0}
      ,
  \end{align*}
  where the last inequality follows from \cref{eq:mu-strongly-convex-recursion}.
\end{proof}

\section{Proximal Gradient Inequalities}
\label{section:proximal-gradient-inequalities}

This section provides some proximal gradient inequalities that generalize the fundamental prox-grad inequality presented in~\cite[Theorem~10.17]{Beck:2017:1} to Riemannian manifolds.
We discuss that, contrary to the Euclidean setting, these can be used to establish a convergence rate for the special cases of Riemannian proximal point and Riemannian gradient descent algorithms, but not for the Riemannian proximal gradient method on Hadamard manifolds.
We consider these results interesting in their own right, but moved the proofs to \cref{appendix}.

None of the following statements require the function $g$ to be geodesically convex.
Only the function $h$ is required to be geodesically convex.
However, assuming convexity of $g$ would not mitigate the shortcomings of the inequalities.
We also note that, in the following theorem, $\cM$ can be \emph{any} uniquely geodesic Riemannian manifold of bounded sectional curvature.
\begin{theorem}
  \label{thm:prox-grad-inequality}
  Let $\cM$ be a uniquely geodesic Riemannian manifold, and suppose \crefrange{assumption:assumptions:bounded-sectional-curvature}{assumption:assumptions:nonempty-optimal-set} of \cref{assumption:assumptions} hold.
  Let $\lambda > 0$, $p \in \cM$, and $q \in \interior(\dom(g))$ be such that
  \begin{equation}
      \label{eq:prox-grad-assumption}
      g\left(
          \Imap[auto]{\lambda}{q}
      \right)
      \le
      g(q)
      +
      \riemannian{
          \grad g(q)
      }
      {
          \logarithm{q}{\Imap[auto]{\lambda}{q}}
      }
      +
      \frac{1}{2 \lambda}
      \dist^2(
          q
          ,
          \Imap[auto]{\lambda}{q}
      )
      .
  \end{equation}
  Then
  \begin{equation}
      \label{eq:prox-grad-inequality-1}
      \begin{aligned}
          f(p)
          -
          f(\Imap[auto]{\lambda}{q})
          &
          \ge
          \glinear(p, q)
          +
          \frac{1}{2 \lambda}
          \dist^2(p, \Imap[auto]{\lambda}{q})
          -
          \frac{\zeta_{1, \kmin}(D_2)}{2 \lambda}
          \dist^2(p, q)
          \\
          &
          \quad
          +
          \frac{\zeta_{2, \kmax}(D_3) - 1}{2 \lambda}
          \dist^2(\gradientstep{q}, \Imap[auto]{\lambda}{q})
          +
          \frac{\zeta_{2, \kmax}(D_1) - 1}{2 \lambda}
          \dist^2(q, \gradientstep{q})
          ,
      \end{aligned}
  \end{equation}
  and
  \begin{equation}
      \label{eq:prox-grad-inequality-2}
      \begin{aligned}
          f(p)
          -
          f(\Imap[auto]{\lambda}{q})
          &
          \ge
          \glinear(p, q)
          +
          \frac{\zeta_{2, \kmax}(D_3)}{2 \lambda}
          \dist^2(p, \Imap[auto]{\lambda}{q})
          -
          \frac{1}{2 \lambda}
          \dist^2(p, q)
          \\
          &
          \quad
          +
          \frac{1 - \zeta_{1, \kmin}(D_2)}{2 \lambda}
          \dist^2(q, \gradientstep{q})
          +
          \frac{\zeta_{2, \kmax}(D_1) - 1}{2 \lambda}
          \dist^2(q, \Imap[auto]{\lambda}{q})
          ,
      \end{aligned}
  \end{equation}
  where $\glinear(p, q)$ is defined as
  \begin{equation}
      \label{eq:g-linearization}
      \newtarget{def:g-linearization}{
      \glinear(p, q)
      \coloneq
      g(p)
      -
      g(q)
      -
      \riemannian{
          \grad g(q)
      }
      {
          \logarithm{q}{p}
      }}
      ,
  \end{equation}
  and $D_1 = D(q, \gradientstep{q}, \Imap{\lambda}{q})$, $D_2 = D(q, \gradientstep{q}, p)$, and $D_3 = D(\Imap{\lambda}{q}, \gradientstep{q}, p)$, where $D(p, q, r)$ is the diameter of the uniquely geodesic triangle with vertices $p, q, r \in \cM$.
\end{theorem}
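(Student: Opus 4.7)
The plan is to follow the Euclidean blueprint of \cite[Theorem~10.16]{Beck:2017:1}, splitting $f(p) - f(\Imap{\lambda}{q})$ into the $g$ and $h$ contributions, and then using the Riemannian cosine inequalities of \cite[Corollary~15, Remark~16]{MartinezRubioPokutta:2023:1} to convert the resulting inner products into curvature-corrected distance terms. Concretely, first I would write
\begin{equation*}
    f(p) - f(\Imap{\lambda}{q})
    =
    \bigl[g(p) - g(\Imap{\lambda}{q})\bigr]
    +
    \bigl[h(p) - h(\Imap{\lambda}{q})\bigr]
\end{equation*}
and handle each bracket separately.

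For the $h$-part I would use that $\Imap{\lambda}{q} = \prox_{\lambda h}\bigl(\gradientstep{q}\bigr)$ by construction, whose first-order optimality condition reads
$
\tfrac{1}{\lambda}\logarithm{\Imap{\lambda}{q}}{\gradientstep{q}} \in \partial h(\Imap{\lambda}{q}).
$
The geodesic convexity of $h$ then gives the subgradient inequality
\begin{equation*}
    h(p) - h(\Imap{\lambda}{q})
    \ge
    \tfrac{1}{\lambda}\,\riemannian{\logarithm{\Imap{\lambda}{q}}{\gradientstep{q}}}{\logarithm{\Imap{\lambda}{q}}{p}}.
\end{equation*}
For the $g$-part I would rewrite $g(p) - g(\Imap{\lambda}{q}) = \glinear(p,q) + \riemannian{\grad g(q)}{\logarithm{q}{p}} - [g(\Imap{\lambda}{q}) - g(q)]$ and bound the last bracket above using hypothesis~\eqref{eq:prox-grad-assumption}. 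Since $\logarithm{q}{\gradientstep{q}} = -\lambda\grad g(q)$, both inner products containing $\grad g(q)$ turn into $-\tfrac{1}{\lambda}\riemannian{\logarithm{q}{\gradientstep{q}}}{\cdot}$ terms, producing
\begin{equation*}
    g(p) - g(\Imap{\lambda}{q})
    \ge
    \glinear(p,q)
    - \tfrac{1}{\lambda}\riemannian{\logarithm{q}{\gradientstep{q}}}{\logarithm{q}{p}}
    + \tfrac{1}{\lambda}\riemannian{\logarithm{q}{\gradientstep{q}}}{\logarithm{q}{\Imap{\lambda}{q}}}
    - \tfrac{1}{2\lambda}\dist^2(q,\Imap{\lambda}{q}).
\end{equation*}

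Adding the two parts yields a lower bound for $f(p) - f(\Imap{\lambda}{q})$ consisting of $\glinear(p,q)$, a $-\tfrac{1}{2\lambda}\dist^2(q,\Imap{\lambda}{q})$ term, and three inner products of the form $\riemannian{\logarithm{a}{b}}{\logarithm{a}{c}}$ with base points $a\in\{q,\Imap{\lambda}{q}\}$ and endpoints drawn from $\{q,\gradientstep{q},\Imap{\lambda}{q},p\}$. Each such inner product corresponds to one of the three triangles of diameter $D_1, D_2, D_3$ defined in the statement. Now I would invoke the Riemannian cosine inequalities: the $\zeta_{1,\kmin}$-version (valid because $\kmin$ bounds the sectional curvature from below) gives an upper bound of the form $2\langle\log_a b,\log_a c\rangle \le \zeta_{1,\kmin}(D) \dist^2(a,b) + \dist^2(a,c) - \dist^2(b,c)$, and the $\zeta_{2,\kmax}$-version gives the matching lower bound with $\zeta_{2,\kmax}$ multiplying the analogous term. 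Applying one version to each of the three inner products, choosing base points so the triangle $D_i$ matches, produces~\eqref{eq:prox-grad-inequality-1}; swapping the roles of the two cosine inequalities on the triangle containing $p$ and $\Imap{\lambda}{q}$ produces~\eqref{eq:prox-grad-inequality-2}.

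The main obstacle will be the bookkeeping: three inner products, each at one of two base points, each admitting two versions of the cosine inequality, and the resulting $\dist^2$ terms must telescope so that the coefficient of $\dist^2(q,\Imap{\lambda}{q})$, $\dist^2(p,q)$, $\dist^2(p,\Imap{\lambda}{q})$, $\dist^2(q,\gradientstep{q})$, and $\dist^2(\gradientstep{q},\Imap{\lambda}{q})$ match exactly the coefficients in~\eqref{eq:prox-grad-inequality-1} and~\eqref{eq:prox-grad-inequality-2}. In the Euclidean case ($\zeta_{1,\kmin}=\zeta_{2,\kmax}\equiv 1$) everything cancels down to the standard prox-grad inequality; here the $\zeta-1$ factors on the curvature-sensitive triangles persist, which is precisely why, as the authors announce, these inequalities cannot be used to mimic the Euclidean convergence proof on general Hadamard manifolds.
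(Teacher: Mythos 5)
Your proposal follows the same route as the paper's proof: the prox optimality condition $\tfrac{1}{\lambda}\logarithm{\Imap{\lambda}{q}}{\gradientstep{q}} \in \partial h(\Imap{\lambda}{q})$ plus geodesic convexity of $h$, the hypothesis \eqref{eq:prox-grad-assumption} to replace $g(\Imap{\lambda}{q})$ and isolate $\glinear(p,q)$, and then the two Riemannian cosine inequalities applied to the three resulting inner products over the triangles of diameters $D_1, D_2, D_3$, with the two stated inequalities obtained by placing the $\zeta$-factors on the alternative legs. The only cosmetic imprecision is that passing from \eqref{eq:prox-grad-inequality-1} to \eqref{eq:prox-grad-inequality-2} swaps the $\zeta$-placement on all three triangles, not only the one containing $p$ and $\Imap{\lambda}{q}$, but the bookkeeping you flag does close exactly as you predict.
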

\begin{proof}
See Appendix~\ref{appendix:proof-prox-grad-inequality}.
\end{proof}
The next corollary is a special case of the fundamental prox-grad inequality on Hadamard manifolds.
\begin{corollary}
    \label{cor:fundamental-prox-grad-inequality}
    If \crefrange{assumption:assumptions:bounded-sectional-curvature}{assumption:assumptions:nonempty-optimal-set} of \cref{assumption:assumptions} hold, $\cM$ is a Hadamard manifold, and $g$ satisfies~\cref{eq:prox-grad-assumption}, then for any $p \in \cM$, $q \in \interior(\dom(g))$ the following inequality holds:
    \begin{equation}
        \label{eq:hadamard-prox-grad-inequality-1}
        \begin{aligned}
            f(p)
            -
            f(\Imap[auto]{\lambda}{q})
            &
            \ge
            \glinear(p, q)
            +
            \frac{1}{2 \lambda}
            \dist^2(p, \Imap[auto]{\lambda}{q})
            -
            \frac{\zeta_{1, \kmin}(\dist(q, \gradientstep{q})) + 1}{4 \lambda}
            \dist^2(p, q)
            \\
            &
            \quad
            -
            \frac{\zeta_{1, \kmin}(\dist(p, q)) - 1}{4 \lambda}
            \dist^2(q, \gradientstep{q})
            .
        \end{aligned}
    \end{equation}
\end{corollary}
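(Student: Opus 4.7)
The plan is to deduce the corollary from \cref{thm:prox-grad-inequality} by specializing to Hadamard manifolds and then symmetrizing via an averaging argument, with a final refinement of the curvature constants.

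First, since the sectional curvature of a Hadamard manifold is non-positive, one may take $\kmax = 0$, which forces $\zeta_{2, \kmax}(s) = 1$ for every $s \ge 0$. Substituting this into \cref{eq:prox-grad-inequality-1} and \cref{eq:prox-grad-inequality-2} kills the two terms of the form $(\zeta_{2,\kmax}(\cdot)-1)/(2\lambda)\,\dist^2(\cdot,\cdot)$ in each, leaving
\begin{align*}
    f(p) - f(\Imap{\lambda}{q}) &\ge \glinear(p,q) + \frac{1}{2\lambda}\dist^2(p, \Imap{\lambda}{q}) - \frac{\zeta_{1,\kmin}(D_2)}{2\lambda}\dist^2(p,q), \\
    f(p) - f(\Imap{\lambda}{q}) &\ge \glinear(p,q) + \frac{1}{2\lambda}\dist^2(p, \Imap{\lambda}{q}) - \frac{1}{2\lambda}\dist^2(p,q) + \frac{1 - \zeta_{1,\kmin}(D_2)}{2\lambda}\dist^2(q, \gradientstep{q}).
\end{align*}
Averaging these two already produces a bound of the shape of \cref{eq:hadamard-prox-grad-inequality-1}, with coefficients $-(1 + \zeta_{1,\kmin}(D_2))/(4\lambda)$ and $(1 - \zeta_{1,\kmin}(D_2))/(4\lambda)$, but with $\zeta_{1,\kmin}$ still evaluated at the full triangle diameter $D_2$ in both positions rather than at the sharper arguments $\dist(q, \gradientstep{q})$ and $\dist(p,q)$ appearing in the statement.

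To recover those sharper arguments, I would rerun the proof of \cref{thm:prox-grad-inequality} under the Hadamard assumption and, at the step where \cref{eq:cosine-law-bound-2} bounds the inner product $\riemannian{\logarithm{q}{\gradientstep{q}}}{\logarithm{q}{p}}$, replace that estimate with the refined Zhang--Sra-type cosine inequality pointed to in \cite[Remark~16]{MartinezRubioPokutta:2023:1}. There are two natural refined upper bounds available for this inner product: one that attaches $\zeta_{1,\kmin}(\dist(q, \gradientstep{q}))$ to $\dist^2(p,q)$, and a symmetric one that attaches $\zeta_{1,\kmin}(\dist(p,q))$ to $\dist^2(q, \gradientstep{q})$. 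Inserting the first refinement into the route that led to \cref{eq:prox-grad-inequality-1} and the second into the route that led to \cref{eq:prox-grad-inequality-2}, and then averaging, yields precisely the asymmetric coefficients of \cref{eq:hadamard-prox-grad-inequality-1}.

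The main obstacle is exactly this pairing of refined bounds with routes. Because $\zeta_{1,\kmin}$ is monotonically increasing and the diameter $D_2$ dominates every individual side, simply applying the theorem as a black box and averaging only recovers a strictly weaker statement with $\zeta_{1,\kmin}(D_2)$ in place of both $\zeta_{1,\kmin}(\dist(q, \gradientstep{q}))$ and $\zeta_{1,\kmin}(\dist(p,q))$. The nontrivial part is to see that the two refined Zhang--Sra applications must be inserted in this specific asymmetric way, so that after averaging the $\zeta$-argument in front of $\dist^2(p,q)$ is $\dist(q, \gradientstep{q})$ and the $\zeta$-argument in front of $\dist^2(q, \gradientstep{q})$ is $\dist(p,q)$, matching the statement exactly.
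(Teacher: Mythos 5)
Your proposal is correct and follows essentially the same route as the paper: specialize to $\kmax \le 0$ so that $\zeta_{2,\kmax} \equiv 1$, sharpen the cosine-law estimate for $\riemannian{\logarithm{q}{\gradientstep{q}}}{\logarithm{q}{p}}$ via \cite[Remark~16]{MartinezRubioPokutta:2023:1} in the two asymmetric ways (attaching $\zeta_{1,\kmin}(\dist(q,\gradientstep{q}))$ to $\dist^2(p,q)$ along the route of \cref{eq:prox-grad-inequality-1} and $\zeta_{1,\kmin}(\dist(p,q))$ to $\dist^2(q,\gradientstep{q})$ along the route of \cref{eq:prox-grad-inequality-2}), and then average the two resulting inequalities. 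Your observation that a black-box application of \cref{thm:prox-grad-inequality} only yields the weaker $\zeta_{1,\kmin}(D_2)$ constants, so that one must re-enter the proof at \cref{eq:cosine-law-bound-2}, is exactly the point the paper's proof relies on.
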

\begin{proof}
See Appendix~\ref{appendix:proof-fundamental-prox-grad-inequality}.
\end{proof}

The following corollary provides a second sufficient decrease condition compared to~\cite[Lemma~4.4]{BergmannJasaJohnPfeffer:2025:1}, which is a generalization of~\cite[Corollary~10.18]{Beck:2017:1}.
\begin{corollary}
    \label{cor:sufficient-decrease-second-version}
    If \cref{assumption:assumptions} holds and $\cM$ is a Hadamard manifold, then for any $p \in \interior(\dom(g))$ such that%
    \begin{equation}
        \label{eq:sufficient-decrease-second-version}
        g(\Imap[auto]{\lambda}{p})
        \le
        g(p)
        +
        \riemannian{
            \grad g(p)
        }
        {
            \logarithm{p}{\Imap[auto]{\lambda}{p}}
        }
        +
        \frac{1}{2 \lambda}
        \dist^2(p, \Imap[auto]{\lambda}{p})
        ,
    \end{equation}
    the following inequality holds:
    \begin{equation}
        \label{eq:sufficient-decrease-second-version-2}
        f(p)
        -
        f(\Imap[auto]{\lambda}{p})
        \ge
        \frac{1}{2 \lambda}
        \dist^2(p, \Imap[auto]{\lambda}{p})
        .
    \end{equation}
\end{corollary}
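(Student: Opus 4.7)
The plan is to apply the fundamental prox-grad inequality of \cref{cor:fundamental-prox-grad-inequality} with the two free points chosen to coincide, i.e.\ $p \leftarrow p$ and $q \leftarrow p$. Note that the hypothesis \cref{eq:sufficient-decrease-second-version} of the corollary is precisely the hypothesis \cref{eq:prox-grad-assumption} of \cref{thm:prox-grad-inequality} (and hence of \cref{cor:fundamental-prox-grad-inequality}) specialized to $q = p$, so the prox-grad inequality is applicable.

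Concretely, I would substitute $q = p$ into \cref{eq:hadamard-prox-grad-inequality-1} and simplify term by term. The linearization error vanishes: $\glinear(p, p) = g(p) - g(p) - \riemannian{\grad g(p)}{\logarithm{p}{p}} = 0$, since $\logarithm{p}{p} = 0_p$. The term involving $\dist^2(p, q)$ vanishes outright because $\dist(p, p) = 0$. For the last term, one needs the identity $\zeta_{1, \kmin}(0) = 1$, which follows from the definition in \cref{eq:hessian-eigs}: if $\kmin \ge 0$ this is immediate, and if $\kmin < 0$ one uses the standard limit $s \coth(s) \to 1$ as $s \to 0$. Hence the coefficient $\zeta_{1, \kmin}(\dist(p, p)) - 1 = 0$ kills the final term as well.

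After these cancellations, the only terms that survive on the right-hand side of \cref{eq:hadamard-prox-grad-inequality-1} are $f(p) - f(\Imap[auto]{\lambda}{p})$ on the left and $\frac{1}{2\lambda}\dist^2(p, \Imap[auto]{\lambda}{p})$ on the right, which is exactly \cref{eq:sufficient-decrease-second-version-2}. The argument is essentially a bookkeeping exercise once the fundamental prox-grad inequality is in hand; the only subtle point is the continuous extension of $\zeta_{1, \kmin}$ at $0$, but this is routine. No further assumptions on curvature or on $g$ beyond those already in \cref{assumption:assumptions} are required, and in particular $g$ need not be geodesically convex here, consistent with the comment at the beginning of \Cref{section:proximal-gradient-inequalities}.
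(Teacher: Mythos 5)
Your proposal is correct and matches the paper's own proof, which consists of exactly the same step: setting $q = p$ in \eqref{eq:hadamard-prox-grad-inequality-1}. The term-by-term cancellations you spell out (including $\zeta_{1,\kmin}(0)=1$) are the implicit bookkeeping behind the paper's one-line argument.
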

\begin{proof}
    The claim follows by setting $q = p$ in \cref{eq:hadamard-prox-grad-inequality-1}.
\end{proof}

In the following, we provide a discussion on the function values' convergence rates of the Riemannian proximal point (RPP) algorithm, \cf~\cite{FerreiraOliveira:2002:1}, and the Riemannian gradient descent (RGD) algorithm, \cf~\cite{ZhangSra:2016:1}, on Hadamard manifolds following an approach exploiting the fundamental proximal gradient inequalities.
Both algorithms can be seen as special cases of \cref{algorithm:CRPG} with $g = 0$ and $h \neq 0$ for RPP, and $g \neq 0$ and $h = 0$ for RGD.
We also outline how this approach provides known convergence rates for RPP and RGD, but not for \cref{algorithm:CRPG}, in contrast to the Euclidean case in~\cite[Theorem~10.21]{Beck:2017:1}.
\begin{theorem}
  \label{thm:rpp-rgd-convergence-rates}
  Let $\cM$ be a Hadamard manifold, and let $R \coloneq \dist(p^\ast, \sequence{p}{0})$, with $p^\ast \in \cS$ a solution of the optimization problem in \cref{eq:splitting}.
  Denote with $\sequence{\Delta}{k}$ the optimal function gap at iteration $k \in \bbN$.
  Under the assumptions of \cref{thm:convex-convergence-rate}, the following hold.
  \begin{itemize}
    \item If $g = 0$,
      \begin{equation*}
          \sequence{\Delta}{k}
          \le
          \frac{R \lipgrad}{2 \betastepsize k}
          ,
      \end{equation*}
      where $\betastepsize$ is as in \cref{lemma:convex-stepsize-bounds}.

    \item If $h = 0$ and the stepsize $\lambda$ is constant throughout the iterations,
      \begin{equation*}
          \sequence{\Delta}{k}
          \le
          \frac{
              R^2
              +
              2 \lambda \zeta_{1, \kmin}(R) \sequence{\Delta}{0}
          }{
              2 \lambda (
                  \zeta_{1, \kmin}(R)
                  +
                  k
              )
          }
          .
      \end{equation*}
  \end{itemize}
\end{theorem}
\begin{proof}
  See \cref{appendix:proof-convergence-rates}.
\end{proof}

In general, $g, h \neq 0$, and the term involving $\dist^2(q, \gradientstep{q})$ in \cref{eq:hadamard-prox-grad-inequality-1} cannot be canceled out using the sufficient decrease condition.
Therefore, the fundamental proximal gradient inequality is not sufficient to directly provide a convergence rate of the function values for \cref{algorithm:CRPG}, as no telescoping can be easily achieved following this path.

Furthermore, in the Euclidean setting, the proximal gradient inequality is exploited in~\cite[Theorem~10.23]{Beck:2017:1} to establish the Fejér monotonicity of the sequence $\sequence(){p}{n}$ in the convex case, which is then used in~\cite[Theorem~10.24]{Beck:2017:1} to prove convergence of such sequence to an optimal solution of the problem \cref{eq:splitting}.
The same cannot be achieved here.
The reason is that none of the fundamental proximal gradient inequalities imply the Fejér monotonicity of the sequence $\sequence(){p}{n}$ in the geodesically convex case.
Indeed, taking \cref{eq:prox-grad-convergence-failure} and dropping the positive terms gives
\begin{align*}
    \dist^2(\sequence{p}{n+1}, p^\ast)
    &
    \le
    \frac{
        \zetafirst{\dist(\sequence{p}{n}, z_{\sequence{p}{n}})}
        +
        1
    }{2}
    \dist^2(\sequence{p}{n}, p^\ast)
    \\
    &
    \quad
    +
    \frac{
        \zetafirst{\dist(p^\ast, \sequence{p}{n})}
        -
        1
    }{2}
    \dist^2(\sequence{p}{n}, z_{\sequence{p}{n}})
    ,
\end{align*}
which does not imply any Fejér monotonicity.

\section{Numerical Examples}
\label{section:numerics}

In this section, we present numerical experiments to evaluate the performance of the Riemannian Proximal Gradient Method (CRPG) in \cref{algorithm:CRPG} in both the convex and strongly convex case.

CRPG is implemented in \julia using \manoptjl{}~\cite{Bergmann:2022:1} so that it can be used with manifolds from \manifoldsjl{}~\cite{AxenBaranBergmannRzecki:2023}.
Unless otherwise stated, we cap CRPG at $5000$ iterations and set the default stopping criterion to a tolerance of $10^{-7}$ on the norm of the gradient mapping.
The warm start factor $\theta$ in \cref{line:warm-start} of \cref{algorithm:backtracking} is set to a default of $1$.
All numerical experiments were performed on a MacBook~Pro~M1, 16~GB~RAM running macOS~15.5.
These experiments are available as notebooks in the package \manoptexamplesjl~\cite{BergmannJasa:2025:17277311}.

\subsection{Convex Example on Symmetric Positive Definite Matrices}
\label{subsection:convex-example-symmetric-positive-definite-matrices}

In our first example, we consider the space of $n$-by-$n$ symmetric positive definite matrices $\cP(n)$, which, when equipped with the affine invariant metric, is complete and hence a Hadamard manifold, see, \eg,~\cite[Proposition~3.1]{DolcettiPertici:2018:1}. Its sectional curvature is nonpositive and bounded from below by $\kmin = -\frac{1}{2}$; see, \eg,~\cite[Appendix~I.1, Proposition~I.1]{CriscitielloBoumal:2020:1}.

Let $\cM = \cP(n)$ with inner product $\riemannian{X_p}{Y_p} = \trace(p^{-1} X_p p^{-1} Y_p)$ for $X_p, Y_p \in T_p \cM$.
For $\tau > 0$ and $\bar q \in \cM$, we solve the optimization problem in \cref{eq:splitting} with
\begin{equation*}
    g(p) \coloneq (\log(\det(p)))^4
    \quad
    \text{and}
    \quad
    h(p) \coloneq \tau \dist(p, \bar q)
    .
\end{equation*}
By~\cite[Example~6.1]{BergmannFerreiraSantosSouza:2024}, $g$ is geodesically convex on $\cM$ with gradient and Hessian
\begin{equation*}
    \grad g(p) = 4 (\log(\det(p)))^3 p
    \quad
    \text{and}
    \quad
    \Hess g(p) [X_p] = 12 (\log(\det(p)))^2 \trace(p^{-1} X_p) p
    ,
\end{equation*}
respectively.
Hence, $\riemannian{\Hess g(p) [X_p]}{X_p} = 12 (\log(\det(p)))^2 \trace(p^{-1} X_p)^2$.
One can check that the smallest and largest eigenvalues of $\Hess g(p)$ are $\lambda_{\text{min}} = 0$ and $\lambda_{\text{max}} = 12 n (\log(\det(p)))^2 > 0$.

Finally, the proximal mapping of $h$ is
$
    \prox_{\lambda h}(p)
    =
    \geodesic<l>{p}{\bar q}(
        \min\{
            \lambda \tau
            ,
            \dist(p, \bar q)
        \}
    )
    ,
$
where $\geodesic<l>{p}{\bar q}$ is the unit-speed geodesic that starts from $p$ in the direction of $\bar q$.
See, \eg,~\cite[Proposition~1]{WeinmannDemaretStorath:2014:1}.
\begin{table}[tbp]
    \centering
    \begin{adjustbox}{max width=\textwidth}
    \pgfplotstabletypeset[col sep = comma,
    columns={0,1,2,4,5,6}, % Skip column 3
    every head row/.style = {before row = \toprule
        \multicolumn{1}{c}{} & \multicolumn{2}{c}{Constant} & \multicolumn{2}{c}{Backtracked}\\
        \cmidrule(lr){2-3}\cmidrule(lr){4-5},
    after row = \midrule},
    every last row/.style = {after row = \midrule},
    display columns/0/.style = {
        column name = Dimension,
        string type,
        column type = {S[table-number-alignment = right, table-format = 5, table-alignment-mode = format]},
    },
    display columns/1/.style = {
        column name = Iter.,
        string type,
        column type = {S[table-number-alignment = right, table-format = 5, table-alignment-mode = format]},
    },
    display columns/2/.style = {
        column name = Time (sec.),
        string type,
        column type = {S[table-number-alignment = right, table-auto-round = true, scientific-notation = engineering, exponent-mode = scientific, table-format = 1.2e1]},
    },
    display columns/3/.style = {
        column name = Iter.,
        string type,
        column type = {S[table-number-alignment = right, table-format = 5, table-alignment-mode = format]},
    },
    display columns/4/.style = {
        column name = Time (sec.),
        string type,
        column type = {S[table-number-alignment = right, table-auto-round = true, scientific-notation = engineering, exponent-mode = scientific, table-format = 1.2e1]},
    },
    display columns/5/.style = {
        column name = Objective,
        string type,
        column type = {S[table-number-alignment = right, table-auto-round = true, table-format = 1.6, table-alignment-mode = format]},
    },
    multicolumn names,
    % mark by best iterations
    every row 0 column 3/.style={postproc cell content/.style={@cell content={\color{table-highlight-best}##1}}},
    every row 1 column 3/.style={postproc cell content/.style={@cell content={\color{table-highlight-best}##1}}},
    every row 2 column 3/.style={postproc cell content/.style={@cell content={\color{table-highlight-best}##1}}},
    every row 3 column 3/.style={postproc cell content/.style={@cell content={\color{table-highlight-best}##1}}},
    % mark by best times
    every row 0 column 4/.style={postproc cell content/.style={@cell content={\color{table-highlight-best}##1}}},
    every row 1 column 4/.style={postproc cell content/.style={@cell content={\color{table-highlight-best}##1}}},
    every row 2 column 4/.style={postproc cell content/.style={@cell content={\color{table-highlight-best}##1}}},
    every row 3 column 4/.style={postproc cell content/.style={@cell content={\color{table-highlight-best}##1}}},
    ]
    {data/spd/CRPG-Convex-SPD-Comparisons.csv}
  \end{adjustbox}
  \vspace{1em}
	\caption{
        Comparison between CRPG with a constant and backtracked stepsize on $\cP(n)$ with varying dimensions.
        Both methods converged to solutions with objective values that differ by $10^{-11}$ at most.
        Example from \cref{subsection:convex-example-symmetric-positive-definite-matrices}.
    }
	\label{table:CRPG-SPD-comparisons}
\end{table}

\begin{figure}
    % Read the CSV file
    \pgfplotstableread[col sep=comma]{data/spd/CRPG-Convex-SPD-Constant-2.csv}\constantstepsize
    \pgfplotstableread[col sep=comma]{data/spd/CRPG-Convex-SPD-Backtracking-2.csv}\backtrackedstepsize

    \centering
    \begin{tikzpicture}
        \begin{semilogyaxis}[
            height=0.5\textwidth,
            width=0.8\textwidth,
            xlabel={iterations},
            ylabel={function value error},
            ylabel near ticks,
            xlabel near ticks,
            ymajorgrids,
            %xmin=.5, xmax=60,
            % ymin=1e-10,
            label style={font=\scriptsize},
            tick label style={font=\scriptsize},
            legend style={
                font=\scriptsize,
                nodes={scale=0.66},
                at={(0.975,0.6)},
                anchor=east,
            },
            legend cell align=left,
        ]
            \addplot[color=TolBrightBlue,thick] table[x=Iteration, y=Diff] {\constantstepsize};
            \addplot[color=TolBrightCyan,thick] table[x=Iteration, y=Diff] {\backtrackedstepsize};
            \addplot[color=TolBrightYellow,line width=1.5pt,domain=1:370,samples=130,dashed] {1/x};
            \addplot[color=TolBrightGreen,line width=1.5pt,domain=1:51,samples=130,dashed] {1/2^x};
            \legend{CRPG constant stepsize, CRPG backtracking, $\cO(1/k)$, $\cO(1/2^k)$};
        \end{semilogyaxis}
    \end{tikzpicture}
    \caption{Error of the cost function $f(\sequence{p}{k}) - f_{\mathrm{opt}}$ on $\cP(2)$ for CRPG with a constant and backtracked stepsizes.
    Example from \cref{subsection:convex-example-symmetric-positive-definite-matrices}.}
    \label{fig:CRPG-SPD-cost-error}
\end{figure}

\paragraph{\textbf{Implementation Details}}
\footnote{The code is available at \url{https://juliamanifolds.github.io/ManoptExamples.jl/stable/examples/CRPG-Convex-SPD/}}
We estimated the Lipschitz constant $\lipgrad$ of the gradient of $g$ over a geodesic ball of radius $2 \dist(\sequence{p}{0}, \bar q)$ around a randomly chosen starting point $\sequence{p}{0}$.
The constant step size is $\lambda = \frac{1}{\lipgrad}$ in \cref{algorithm:CRPG}.
For the backtracking procedure, we use an initial guess $s = 1$, while the contraction factor is set to $\eta = 0.9$ and the warm start factor to $\theta = 2$.
The iteration cap was set to $20 000$.
The weight in the objective function $f$ was set to $\tau = \frac{1}{2}$.

\paragraph{\textbf{Discussion}}
The results of the numerical experiment are shown in \cref{table:CRPG-SPD-comparisons}.
The first column displays the manifold dimension $\frac{n(n+1)}{2}$, where $n \in \{2, 3, 4, 5\}$.
As can be seen, the runs with a constant stepsize converge in more iterations than the one with a backtracked stepsize, are up to an order of magnitude slower than the latter, as can be seen in the last three rows of the table.
\cref{fig:CRPG-SPD-cost-error} shows the error of the cost function $f(\sequence{p}{k}) - f_{\mathrm{opt}}$, for both stepsize strategies.
The error is plotted against the number of iterations, which shows that the backtracked stepsize converges faster than the constant stepsize.
The dotted lines show the rates $\cO(1/k)$ and $\cO(2^{-k})$.
This is consistent with \cref{thm:convex-convergence-rate}.

\subsection{Strongly Convex Example on the Sphere}
\label{subsection:convex-example-sphere}

The following example illustrates the convergence rate obtained in \cref{thm:mu-strongly-convex-convergence-rate} for our method on manifolds with positive curvature.
In contrast, the analysis of the proximal gradient method presented in~\cite{FengHuangSongYingZeng:2021} is limited to manifolds with non‑positive curvature and therefore provides \emph{no} convergence guarantee on the sphere.

We solve the problem of computing the Fréchet mean of $N=1000$ points that lie in a geodesically convex ball on the sphere, while simultaneously encouraging proximity to a reference point $\bar q$ that belongs to the same ball.
Thus we consider \cref{eq:splitting} with
\begin{equation}\label{eq:sphere-example}
    g(p)=\frac{1}{2N}\sum_{j=1}^{N}\dist^{2}(p,q_{j}),
    \qquad
    h(p)=\tau\,\dist(p,\bar q),
\end{equation}
where $\bar q, q_{1},\dots,q_{N}\in\cB\bigl(q_0,r\bigr)$ are randomly generated points in a geodesically convex ball of radius $r<\frac{\pi}{4}$ around some fixed center $q_{0}\in\mathbb S^{n}$, and the weighting parameter is chosen as $\tau=\frac{1}{10}$.

On the unit sphere $\mathbb S^{n}$, the distance function $p\mapsto\dist(p,q)$ and the squared distance ${p\mapsto\dist^{2}(p,q)}$ are geodesically convex on every geodesic ball of radius strictly smaller than $\frac \pi 2$.
See, \eg,~\cite[Proposition~4.2]{LiYao:2012:1} and~\cite{FerreiraIusemNemeth:2014:1}.
Furthermore, the function $p\mapsto \frac 1 2 \dist^{2}(p,q)$ is $\zeta_{2, 1}(2 r)$-strongly geodesically convex by~\cite[Corollary~2.1]{AlimisisBecigneulLucchiOrvieto:2020:1}.
And the proximal mapping of $h$ is given by
${
    \prox_{\lambda h}(p)
    =
    \geodesic<l>{p}{\bar q}(
        \min\{
            \lambda
            ,
            \dist(p, \bar q)
        \}
    )
    ,
}$
where $\geodesic<l>{p}{\bar q}$ is the unit-speed geodesic that starts from $p$ in the direction of $\bar q$.
See, \eg,~\cite[Proposition~1]{WeinmannDemaretStorath:2014:1}.

\begin{table}[tbp]
    \centering
    \begin{adjustbox}{max width=\textwidth}
    \pgfplotstabletypeset[col sep = comma,
    columns={0,1,2,4,5,6}, % Skip column 3
    every head row/.style = {before row = \toprule
        \multicolumn{1}{c}{} & \multicolumn{2}{c}{Constant} & \multicolumn{2}{c}{Backtracked}\\
        \cmidrule(lr){2-3}\cmidrule(lr){4-5},
    after row = \midrule},
    every last row/.style = {after row = \midrule},
    display columns/0/.style = {
        column name = Dimension,
        string type,
        column type = {S[table-number-alignment = right, table-format = 5, table-alignment-mode = format]},
    },
    display columns/1/.style = {
        column name = Iter.,
        string type,
        column type = {S[table-number-alignment = right, table-format = 5, table-alignment-mode = format]},
    },
    display columns/2/.style = {
        column name = Time (sec.),
        string type,
        column type = {S[table-number-alignment = right, table-auto-round = true, scientific-notation = engineering, exponent-mode = scientific, table-format = 1.2e1]},
    },
    display columns/3/.style = {
        column name = Iter.,
        string type,
        column type = {S[table-number-alignment = right, table-format = 5, table-alignment-mode = format]},
    },
    display columns/4/.style = {
        column name = Time (sec.),
        string type,
        column type = {S[table-number-alignment = right, table-auto-round = true, scientific-notation = engineering, exponent-mode = scientific, table-format = 1.2e1]},
    },
    display columns/5/.style = {
        column name = Objective,
        string type,
        column type = {S[table-number-alignment = right, table-auto-round = true, table-format = 1.6, table-alignment-mode = format]},
    },
    multicolumn names,
    % mark by best iterations
    every row 0 column 3/.style={postproc cell content/.style={@cell content={\color{table-highlight-best}##1}}},
    every row 1 column 3/.style={postproc cell content/.style={@cell content={\color{table-highlight-best}##1}}},
    every row 2 column 3/.style={postproc cell content/.style={@cell content={\color{table-highlight-best}##1}}},
    every row 3 column 3/.style={postproc cell content/.style={@cell content={\color{table-highlight-best}##1}}},
    % mark by best times
    every row 0 column 4/.style={postproc cell content/.style={@cell content={\color{table-highlight-best}##1}}},
    every row 1 column 4/.style={postproc cell content/.style={@cell content={\color{table-highlight-best}##1}}},
    every row 2 column 4/.style={postproc cell content/.style={@cell content={\color{table-highlight-best}##1}}},
    every row 3 column 4/.style={postproc cell content/.style={@cell content={\color{table-highlight-best}##1}}},
    ]
    {data/sphere/results.csv}
  \end{adjustbox}
  \vspace{1em}
	\caption{
        Comparison between CRPG with a constant and backtracked stepsize on $\mathbb S^n$ with varying dimensions. The results are averaged over 10 runs.
        Both methods converged to solutions with objective values that differ by $10^{-11}$ at most.
        Example from \cref{subsection:convex-example-sphere} with $r = \frac{\pi}{6}$, $s = 10 \lambda$ and $\delta =1.0$.
    }
	\label{table:Sphere-example}
\end{table}

\begin{figure}[tbp]
    \pgfplotstableread[col sep=comma]{data/sphere/results-02pi-radius-const.csv}\constantstepsizea
    \pgfplotstableread[col sep=comma]{data/sphere/results-02pi-radius-bt.csv}\backtrackedstepsizea
    \pgfplotstableread[col sep=comma]{data/sphere/results-0125pi-radius-const.csv}\constantstepsizeb
    \pgfplotstableread[col sep=comma]{data/sphere/results-0125pi-radius-bt.csv}\backtrackedstepsizeb
        \pgfplotstableread[col sep = comma]{data/constrained-mean/CRPG-CnBallConstrMean-2-200-times.csv}\datatabletimes
    \pgfplotstableread[col sep = comma]{data/constrained-mean/CRPG-CnBallConstrMean-2-200-iterations.csv}\datatableiterations
    \begin{subfigure}{.5\textwidth}
        \centering
        \begin{tikzpicture}
            \begin{semilogyaxis}[
                width=0.99\textwidth,
                xlabel={iterations},
                ylabel={function value error},
                ylabel near ticks,
                xlabel near ticks,
                ymajorgrids,
                %xmin=.5, xmax=60,
                % ymin=1e-10,
                label style={font=\scriptsize},
                tick label style={font=\scriptsize},
                legend style={
                    font=\small,
                    nodes={scale=0.5},
                    at={(0.52,1.00)},
                    anchor=south,
                    row sep=-0.3ex,
                    column sep=0.7ex,
                    draw=none
                },
                legend columns=2,
                legend cell align=left,
            ]
                \addplot[color=TolBrightBlue,line width = 1.0pt,
                        mark= Mercedes star, mark options={line width = 0.5pt},
                        mark repeat = 3, mark phase = 1]
                        table[x=Iter, y=delta0.113] {\constantstepsizea};
                \addplot[color=TolBrightGreen, line width = 1.0, domain=1:150, samples=130, dashed, opacity=1.0] {(0.987)^x};
                \addplot[color=TolBrightCyan,line width = 1.0pt,
                        mark = triangle, mark options={line width = 0.5pt, scale = 0.8},
                        mark repeat = 5, mark phase = 1]
                        table[x=Iter, y=delta0.1] {\backtrackedstepsizea};
                \addplot[color=TolBrightRed,line width=1.0pt,domain=1:150,samples=130,dashed, opacity=0.5] {(0.9)^x};
                \addplot[color=TolBrightCyan,line width = 1.0pt,
                        mark =Mercedes star flipped, mark options={line width = 0.5pt},
                        mark repeat = 2, mark phase = 2]
                        table[x=Iter, y=delta1.0] {\backtrackedstepsizea};
                \addplot[color=TolBrightRed,line width=1.0pt,domain=1:60,samples=130,dashed, opacity=0.7] {(0.7)^x};
                \addplot[color=TolBrightCyan,line width = 1.0pt,
                        mark = o, mark options={line width = 0.5pt, scale = 0.8},
                        mark repeat = 2, mark phase = 1]
                        table[x=Iter, y=delta100.0] {\backtrackedstepsizea};
                \addplot[color=TolBrightRed,line width=1.0pt,domain=1:31,samples=130,dashed, opacity=0.9] {(0.5)^x};
                \legend{{Constant, $\bar\delta\!=\!0.113$},
                    $\cO(0.987^k)$ (Thm. \ref{thm:mu-strongly-convex-convergence-rate}),
                    {Backtracked, $\delta\!=\!0.10$},%
                    $\cO(0.9^k)$,
                    {Backtracked, $\delta\!=\!1.0$},%
                    $\cO(0.7^k)$,
                    {Backtracked, $\delta\!=\!100.0$},%
                    $\cO(0.5^k)$};
            \end{semilogyaxis}
        \end{tikzpicture}
    \caption{$ \bar q, q_1, ..., q_N \in\cB(q_0, r)$, where $r = \frac{\pi}{5}$.
    %\\ and $\lambda = 0.125$, $s = 0.126$
    }
    \label{subfig:sphere-radius-02pi}
    \end{subfigure}%
    \begin{subfigure}{.5\textwidth}
        \centering
        \begin{tikzpicture}
            \begin{semilogyaxis}[
                width=0.99\textwidth,
                xlabel={iterations},
                ylabel={function value error},
                ylabel near ticks,
                xlabel near ticks,
                ymajorgrids,
                %xmin=.5, xmax=60,
                % ymin=1e-10,
                label style={font=\scriptsize},
                tick label style={font=\scriptsize},
                legend style={
                    font=\small,
                    nodes={scale=0.5},
                    at={(0.52,1.0)},
                    anchor=south,
                    row sep=-0.3ex,
                    column sep=0.7ex,
                    draw=none
                },
                legend columns=2,
                legend cell align=left,
            ]
                \addplot[color=TolBrightBlue,line width = 1.0pt,
                        mark= Mercedes star, mark options={line width = 0.5pt},
                        mark repeat = 3, mark phase = 1]
                        table[x=Iter, y=delta0.184] {\constantstepsizeb};
                \addplot[color=TolBrightGreen, line width = 1.0pt,domain=1:155, samples=130,dashed, opacity=1.0] {(0.963)^x};
                \addplot[color=TolBrightCyan,line width = 1.0pt,
                        mark = triangle, mark options={line width = 0.5pt, scale = 0.8},
                        mark repeat = 5, mark phase = 1]
                        table[x=Iter, y=delta0.1] {\backtrackedstepsizeb};
                \addplot[color=TolBrightRed,line width=1.0pt,domain=1:155,samples=100,dashed, opacity=0.5] {(0.9)^x};
                \addplot[color=TolBrightCyan,line width = 1.0pt,
                        mark =Mercedes star flipped, mark options={line width = 0.5pt},
                        mark repeat = 1, mark phase = 1]
                        table[x=Iter, y=delta1.0] {\backtrackedstepsizeb};
                \addplot[color=TolBrightRed,line width=1.0pt,domain=1:60,samples=130,dashed, opacity=0.7] {(0.7)^x};
                \addplot[color=TolBrightCyan,line width = 1.0pt,
                        mark = o, mark options={line width = 0.5pt, scale = 0.8},
                        mark repeat = 1, mark phase = 1]
                        table[x=Iter, y=delta100.0] {\backtrackedstepsizeb};
                \addplot[color=TolBrightRed,line width=1.0pt,domain=1:31,samples=130,dashed, opacity=0.9] {(0.5)^x};
                \legend{{Constant, $\bar \delta\!=\!0.184$},
                    $\cO(0.963^k)$ (Thm. \ref{thm:mu-strongly-convex-convergence-rate}),
                    {Backtracked, $\delta\!=\!0.1$},%
                    $\cO(0.9^k)$,
                    {Backtracked, $\delta\!=\!1.0$},%
                    $\cO(0.7^k)$,
                    {Backtracked, $\delta\!=\!100.0$},%
                    $\cO(0.5^k)$};
            \end{semilogyaxis}
        \end{tikzpicture}
        \caption{$ \bar q, q_1, ..., q_N \in\cB(q_0, r)$, where $r = \frac{\pi}{8}$.
        %\\ and $\lambda = 0.191$, $s = 0.201$
        }
        \label{subfig:sphere-radius-0125pi}
    \end{subfigure}%
    \caption{Error of the cost function $f(\sequence{p}{k}) - f_{\mathrm{opt}}$ on $\mathbb S^n$ with $n = 500$ for CRPG with optimized constant stepsize and backtracked stepsizes for different values of $\delta$.
    Example from \cref{subsection:convex-example-sphere}.}
\label{fig:perturbed-mean}
\end{figure}

\paragraph{\textbf{Implementation Details}}
\footnote{The code is available at \url{https://juliamanifolds.github.io/ManoptExamples.jl/stable/examples/CRPG-Sphere-Example/}}
First, the anchor point $q_0 \in \mathbb S^n$ was drawn uniformly at random.
Then, $N+2$ tangent vectors at $q_0$ were sampled from a normal distribution with standard deviation $\sigma = 1$.
Each vector was then scaled down to a length strictly less than $r$ and mapped onto $\mathbb S^n$ using $\exponential{q_0}$.
The first $N$ resulting points $\{q_1, \ldots, q_N\}$ were used as data points for the function $g$. The remaining two points were used as the reference point $\bar{q}$ for $h$, and the initial point $p_0$ of the optimization, respectively.
Stepsizes were chosen according to the rules described in \cref{remark:stepsizes} and \cref{algorithm:backtracking}. For the constant stepsize, we chose $\bar\delta$ such that $\lambda = \min\{\lambdadelta, \frac{\zetadelta}{\lipgrad} \}$ is maximized.
In the backtracked case, we chose the initial stepsize $s = 10 \lambda$ with varying $\delta$ and parameters $\eta = 0.5$ and $\theta = 2.0$.
\paragraph{\textbf{Discussion}}
\cref{fig:perturbed-mean} shows the convergence behavior of the error in the cost function for the optimized stepsize, as well as for the backtracked one with different initial stepsizes $s$, depending on different values of $\delta$, on subsets of radii $r \in \left\{\frac{\pi}{5}, \frac{\pi}{8}\right\}$.
A linear convergence rate is observed for all step size choices, which is consistent with \cref{thm:mu-strongly-convex-convergence-rate}. 
The rate depends on the problem setting. 
The bounds of $h$ and the gradient of $g$ that are used to compute $\lambdadelta$ depend linearly on the radius $r$ of the set.
Specifically, $\hlowerbound = 0$,  $\hupperbound = 2 \tau r$, and $\gradgupperbound = 2 r$.
We observe that, in the backtracked case, the larger the value of $\delta$, the better the performance of the method.

\subsection{Sparse Mean on $\cH^n$}
\label{subsection:numerics-sparse-approximation}

In the next experiment, we want to compute the mean of $N$ points on the hyperbolic space $\cH^n$ with additional sparsity constraints.
That is, we solve the optimization problem in \cref{eq:splitting} on $\cM = \cH^n$, the hyperbolic space equipped with the Minkowski metric of signature $(+, \ldots, +, -)$.
Then $\cH^n$ is a Hadamard manifold with constant sectional curvature $\kmin = -1$.

For the computation of the sparse mean, we set
\begin{equation*}
g(p) = \frac{1}{2N}\sum_{i=1}^N \dist^2(p, q_i)
\end{equation*} for $p \in \cM$ and $q_i \in \cM$ for $i = 1, \ldots, N$, where $N = 1000$ and $q_i$ are randomly sampled points on $\cM$.
Furthermore, we choose $h(p) = \Vert p \Vert_1$, which is the $\ell_1$-norm on the hyperbolic space.

\paragraph{Soft Thresholding on $\cH^n$}
\label{subsubsection:soft-thresholding-Hn}

We now set out to derive a method for computing the proximal operator of the $\ell_1$-norm on the hyperbolic space $\cH^n$ that is easy to implement, while arguing that the function is geodesically convex.

We represent the hyperbolic space $\cH^n$ as a subset of $\mathbb R^{n+1}$ using the Minkowski pseudo inner product $\langle{x},{y}\rangle_M = x^\top J y$ with $J = \diag(1, \ldots, 1, -1)$ as
\begin{equation*}
    \cH^n
    =
    \setDef{x \in \mathbb R^{n+1}}{\langle x, x\rangle_M = -1,\, x_{n+1} >0}
    ,
\end{equation*}
with the metric $g_M = \langle \cdot, \cdot \rangle_M$.
Then the tangent space at $x \in \cH^n$ and the projection of a vector $z \in \mathbb R^{n+1}$ onto it are given by
$
    \tangentSpace{x}[\cH^n]
    =
    \setDef{v \in \mathbb R^{n+1}}{\langle x, v\rangle_M = 0},
$ and
$
    \proj{x}(z)
    =
    z
    +
    \langle x, z\rangle_M
    \,
    x
    ,
$
see, \eg,~\cite[Chapter~7.6]{Boumal:2023:1}.
The geodesic distance is defined as
$
    \dist(x, y)
    =
    \text{arccosh}\left(- \langle x, y\rangle_M\right)
    ,
$
and the exponential and logarithm maps are
\begin{align*}
    \exponential{x}(v)
    &= \cosh{
        \left(\sqrt{\langle v,v\rangle_M}\right)
        }
        x
    +
    \sinh{\left(\sqrt{\langle v,v\rangle_M}\right)}
        \frac v {\sqrt{\langle v,v\rangle_M}},
    \\
    \logarithm{x}(y)
    &=
    \frac{
        \text{arcosh}\left(-\langle x, y\rangle_M \right)
    }{
        \sqrt{\langle x, y\rangle_M^2-1}
    }
    \left(
        y
        +
        \langle x, y \rangle_M x
    \right)
    ,
\end{align*}
for any $x, y \in \cH^n$ and $v \in \tangentSpace{x}[\cH^n]$; for more details see~\cite[Appendix~C]{BergmannPerschSteidl:2016:1}.
With this we can calculate the geodesic arc from $x$ to $y$ as
\begin{align*}
    \gamma_{x,y}(t)
    &=
    \left(
        \cosh(
            t
            \,
            \text{arcosh}\left(-\langle x, y\rangle_M \right)
        )
        +
        \langle x, y\rangle_M
        \frac{
            \sinh(
                t
                \,
                \text{arcosh}\left(-\langle x, y\rangle_M \right)
            )
        }{
            \sqrt{
                \langle x, y\rangle_M^2-1
            }
        }
    \right)
    x
    \\
    &
    \qquad\qquad\qquad
    +
    \left(
    \frac{
        \sinh(
            t
            \,
            \text{arcosh}\left(-\langle x, y\rangle_M \right)
        )
    }{
        \sqrt{
            \langle x, y\rangle_M^2-1
        }
    }
    \right)
    y
    .
\end{align*}
Using this representation of geodesic arcs and the triangle inequality, one can show that the $\ell_1$-norm is geodesically convex on $(\cH^n, g_M)$, \eg by Taylor expanding the coefficients of $x$ and $y$ in the expression for $\gamma_{x,y}(t)$ around $t = 0$ and $t=1$.

The rest of this section is devoted to show that the proximal operator of the $\ell_1$-norm, defined by
\begin{equation}\label{eq:hyperbolic-l1-prox}
    \prox_{\mu \Vert \cdot \Vert_1}(x)
    \coloneq
    \argmin_{y \in \cH^n}
        \Vert y \Vert_1
        +
        \frac{1}{2\mu} \dist^2(x,y),
\end{equation}
can be computed as $p_x(t^*)$, with $t^*$ a fixed point of $\sigma_x\circ p_x$, where
\begin{align*}
    \sigma_x(y)
    \coloneq
    \mu
    \frac{
        \sqrt{\langle x, y\rangle_M^2-1}
    }{
        \text{arcosh}\left(-\langle x, y\rangle_M \right)
    }
    ,
    &
    \quad
    p_x(t)
    \coloneq
    \frac {
        \bar{p}_x(t)
    }{
        \sqrt{
            - \langle \bar{p}_x(t), \bar{p}_x(t)\rangle_M
        }
    }\in \cH^n,
\end{align*}
and $\bar p_x(t)$ is defined elementwise as
\begin{align*}
    (\bar{p}_x(t))_i
    \coloneq
    \begin{cases}
        x_i + t & \text{if } i = n+1,\\
        \max\{0, x_i - \text{sign}(x_i) t\} & \text{otherwise}.
    \end{cases}
\end{align*}
With this result, we can compute the solution of the prox-subproblem \cref{eq:hyperbolic-l1-prox} using a fixed point iteration that in our experience only takes a few steps to converge to the desired accuracy.

First, we show that such fixed points exist and can be easily computed via a fixed point iteration.
\begin{theorem}
    \label{thm:fixed-point-iteration}
    Let $x\in \cH^n$ and $\mu \ge 0$.
    Then there exists a fixed point $t^*$ of $T \coloneq \sigma_x\circ p_x$ and every sequence $t_k = T(t_{k-1})$ with $t_0 \ge 0$ converges to $t^*$.
\end{theorem}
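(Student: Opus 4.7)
The strategy is to verify that $T = \sigma_x \circ p_x$ is a continuous, monotone self-map of a compact interval $[0,M] \subset [0,\infty)$ and then invoke the standard monotone convergence argument on the real line. First, I would show that $p_x(t)$ is well-defined for every $t \ge 0$: the normalization constant $-\langle \bar p_x(t), \bar p_x(t)\rangle_M = (x_{n+1}+t)^2 - \sum_{i=1}^n\bigl(\max\{0,|x_i|-t\}\bigr)^2$ equals $1$ at $t=0$, reduces to $(x_{n+1}+t)^2$ once $t \ge \max_i |x_i|$, and remains strictly positive throughout the intermediate interval because each summand is bounded above by $x_i^2$ while $(x_{n+1}+t)^2 \ge x_{n+1}^2$; hence $p_x$ takes values continuously in $\cH^n$. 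Continuity of $\sigma_x$ away from $y = x$ is immediate from that of $\operatorname{arcosh}$, and the removable singularity at $y = x$ is handled by $\sinh(d)/d \to 1$ as $d \to 0$.

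Next, I would establish a global bound on $T$. The trajectory $\{p_x(t) : t \ge 0\}$ is the continuous image of a half-line with finite limit $(0,\ldots,0,1)\in\cH^n$ as $t\to\infty$, hence it is contained in a compact subset of $\cH^n$. This bounds $\phi(t) \coloneq \dist(x,p_x(t))$ by some $D < \infty$, so $T(t) \le \mu\,\sinh(D)/D \eqcolon M$ for every $t \ge 0$, and in particular $T([0,M]) \subseteq [0,M]$.

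The crux of the proof, and where I expect the real work to sit, is showing that $T$ is monotonically nondecreasing in $t$. Geometrically, as $t$ grows, soft thresholding drives the spatial coordinates of $\bar p_x(t)$ toward zero while the timelike coordinate increases, so after normalization $p_x(t)$ migrates monotonically along the hyperboloid toward the apex $(0,\ldots,0,1)$, away from $x$. Combined with the identity $T(t) = \mu\,\sinh(\phi(t))/\phi(t)$ and the fact that $s \mapsto \sinh(s)/s$ is nondecreasing on $[0,\infty)$, this yields monotonicity of $T$. The rigorous verification reduces to checking $\frac{d}{dt}\langle x, p_x(t)\rangle_M \le 0$ piecewise on the intervals determined by the thresholds $t = |x_i|$, which is a direct if slightly tedious calculation using the quotient rule on the normalization factor and the elementary inequality $(x_{n+1}+t)^2 \ge 1 + \sum_i \max\{0,|x_i|-t\}^2$ coming from the defining identity of $\cH^n$.

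Given monotonicity, the remainder is routine. For any $t_0 \ge 0$ the orbit $\{t_k\}$ is monotone, increasing if $T(t_0) \ge t_0$ and decreasing otherwise, and it is bounded, hence it converges to some $t^\infty$; continuity of $T$ then forces $T(t^\infty) = t^\infty$. Uniqueness of the fixed point, and therefore convergence of every orbit to the same $t^*$, follows from the strict convexity of the proximal subproblem \cref{eq:hyperbolic-l1-prox}: its unique minimizer $y^*$ satisfies $y^* = p_x(\sigma_x(y^*))$, so every fixed point of $T$ corresponds to this single minimizer $y^*$ and must coincide with $t^* = \sigma_x(y^*)$.
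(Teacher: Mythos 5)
Your overall strategy matches the paper's: both proofs hinge on showing that $T$ is monotone by combining (i) monotonicity of $t \mapsto -\langle x, p_x(t)\rangle_M$ (equivalently, of $t \mapsto \dist(x, p_x(t))$), checked piecewise on the intervals between the thresholds $|x_i|$, with (ii) monotonicity of $s \mapsto \sqrt{s^2-1}/\text{arcosh}(s)$, i.e.\ of $d \mapsto \sinh(d)/d$. Your endgame differs in two ways, both defensible. For convergence you use the standard monotone-orbit argument (the orbit of a nondecreasing continuous self-map of a bounded interval is monotone and bounded, hence converges to a fixed point), which is cleaner than the paper's argument that the distance to $t^*$ strictly decreases. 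For uniqueness you invoke strict convexity of the subproblem \cref{eq:hyperbolic-l1-prox} together with the identification of every fixed point with the unique minimizer; this is essentially the content of \cref{thm:prox-l1-hyperbolic}, which the paper proves \emph{afterwards}. Since that later proof only verifies the optimality condition at a given fixed point and does not presuppose uniqueness, your argument is not circular, but it does invert the paper's logical order, and you should say so explicitly; the paper instead extracts uniqueness directly from the strict monotonicity analysis on $[0, x_{\max}]$ together with the fact that $T$ is constant and bounded by $t_{\max}$ beyond $x_{\max}$.

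The one substantive reservation is that you leave the crux unproved. The entire difficulty of the theorem is the verification that $\frac{d}{dt}\langle x, p_x(t)\rangle_M \le 0$ (strictly, in the paper's version) on each interval between thresholds; the paper devotes the bulk of its proof to exactly this computation, bounding the derivative of $\phi_k(t) = -\langle p_x(t), x\rangle_M$ from below by a manifestly positive quantity using the hyperboloid identity. You correctly identify the reduction, the case split, and the key inequality $(x_{n+1}+t)^2 \ge 1 + \sum_i \max\{0, |x_i|-t\}^2$, so the idea is not missing and the step would not fail, but as written the proposal asserts rather than establishes the monotonicity on which everything else rests. To be a complete proof, that calculation must be carried out.
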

\begin{proof}
    Define
    $x_{\max} =
        \max_{i = 1, \dots, n} \abs{x_i}
    $.
    First, we observe that for $t \ge x_{\max}$, we have $p_x(t) = (0, \dots, 0, 1)$ and thus $T(t) =  \mu \frac {\sqrt{x_{n+1}^2 - 1}} {\text{arcosh}(x_{n+1})} =: t_{\max} \le t$.
    Note that $\lim_{s \to -1} \frac {\sqrt{s^2 - 1}} {\text{arcosh}(- s)} = 1$, hence $T(0) = \mu \ge 0$.

    We will now show that $T$ is strictly increasing on the interval $[0, x_{\max}]$.
    Together with $T$ being continuous, this implies that $T$ has exactly one fixed point $t^* \in [0, t_{\max}]$.
    We can assume \wolog that $\abs{x_1}\le \dots \le \abs{x_n}$.
    Notice that $p_x(t)$ is smooth on the smaller intervals $I_k = \left(\abs{x_{k-1}}, \abs{x_k}\right)$ for $k = 1,\dots n$ and $x_0 = 0$.
    Consider the function $\phi_k(t) = - \langle p_x(t), x\rangle_M$ on the interval $I_k$, then
    \begin{equation*}
        \phi_k(t) =
        \frac{
             x_{n+1}(x_{n+1}+t) - \sum_{i=k}^{n} x_i^2 - t \abs{x_i}
        }{
            \sqrt{
               (x_{n+1}+t)^2 - \sum_{i=k}^{n} (\abs{x_i} - t)^2
            }
        }
    \end{equation*}
    and
    \begin{align*}
        \phi_k'(t)
        &=
        \frac{
            \left(\sum_{i=k}^{n+1} \abs{x_i}\right)
            \sqrt{
               (x_{n+1}+t)^2 - \sum_{i=k}^{n} (\abs{x_i} - t)^2
            }
            }{
            (x_{n+1}+t)^2 - \sum_{i=k}^{n} (\abs{x_i} - t)^2
        }
        \\
        & \qquad \qquad
        -
        \frac{
                \left(
                    \left(\sum_{i=k}^{n+1} \abs{x_i}\right)
                    -(n-k)t
                \right)
                \left(
                    x_{n+1}(x_{n+1}+t) - \sum_{i=k}^{n} x_i^2 - t \abs{x_i}
                \right)
            }{
            \sqrt{
                    (x_{n+1}+t)^2 - \sum_{i=k}^{n} (\abs{x_i} - t)^2
            }
            \left(
                (x_{n+1}+t)^2 - \sum_{i=k}^{n} (\abs{x_i} - t)^2
            \right)
        }
        \\
        &
        \ge
        \sum_{i=k}^{n+1} \abs{x_i}
        \frac{
            (x_{n+1}+t)^2 - \sum_{i=k}^{n} (\abs{x_i} - t)^2
            -
            x_{n+1}(x_{n+1}+t) + \sum_{i=k}^{n} x_i^2 - t \abs{x_i}
            }{
            \sqrt{
                    (x_{n+1}+t)^2 - \sum_{i=k}^{n} (\abs{x_i} - t)^2
            }
            \left(
                (x_{n+1}+t)^2 - \sum_{i=k}^{n} (\abs{x_i} - t)^2
            \right)
        }
        \\
        &
        >
        0
        .
    \end{align*}
    Thus, $\phi_k$ is strictly increasing on $[0, x_{\max}]$.
    Since $\frac{\sqrt{s^2-1}}{\text{arcosh}(s)}$ is also strictly increasing for $s\ge 1$, it follows that $T$ is strictly increasing on $[0, x_{\max}]$.

    Now, consider the sequence defined by $t_k = T(t_{k-1})$ for some initial value $t_0$.
    If $t_k < t^*$, then ${t_k<T(t_k)<t^*}$, so $\abs{T(t_k)-t^*} < \abs{t_k-t^*}$.
    If $t_k > t^*$ then $T(t_k)<t_k$ and $T(t_k)\ge t^*$, with equality if $t^*=t_{\max}$. Thus, in this case as well, $\abs{T(t_k)-t^*} < \abs{t_k-t^*}$.
    Therefore, $t_k$ converges to $t^*$.
\end{proof}
% Moved table for imanum
\begin{table}[t]
    \centering
    \hspace{-8em} % manually center table for imanum
    \begin{adjustbox}{max width=\textwidth}
    \pgfplotstabletypeset[
        col sep = comma,
        every head row/.style = {
            before row = \toprule
                \multicolumn{2}{c}{} & \multicolumn{4}{c}{CRPG} & \multicolumn{2}{c}{CPPA}\\
                \multicolumn{2}{c}{Settings} & \multicolumn{2}{c}{Constant stepsize} &\multicolumn{2}{c}{Backtracking} & \\
                \cmidrule(lr){1-2}\cmidrule(lr){3-4}\cmidrule(lr){5-6}\cmidrule(lr){7-8},
            after row = \midrule
        },
        % Add lines to separate the experiments with different t
        every row no 3/.style = {after row = \midrule},
        every row no 7/.style = {after row = \midrule},
        % Add bottomrule to the last row
        every last row/.style = {after row = \bottomrule},
        % Column formatting
        display columns/0/.style = {
            column name = $\mu$,
            string type,
            column type = {S[table-number-alignment = right, table-format = 1.1, table-alignment-mode = format]},
            % Only show the first value of t per each group
            postproc cell content/.append code={
                \ifnum\pgfplotstablerow=0
                    \pgfkeyssetvalue{/pgfplots/table/@cell content}{##1}
                \else
                    \ifnum\pgfplotstablerow=4
                        \pgfkeyssetvalue{/pgfplots/table/@cell content}{##1}
                    \else
                        \ifnum\pgfplotstablerow=8
                            \pgfkeyssetvalue{/pgfplots/table/@cell content}{##1}
                        \else
                            \pgfkeyssetvalue{/pgfplots/table/@cell content}{}
                        \fi
                    \fi
                \fi
            }
        },
        display columns/1/.style = {
            column name = $n$,
            string type,
            column type = {S[table-number-alignment = right, table-format = 3, table-alignment-mode = format]},
        },
        display columns/2/.style = {
            column name = Iter.,
            string type,
            column type = {S[table-number-alignment = right, table-auto-round = true, table-format = 5, table-alignment-mode = format]},
        },
        display columns/3/.style = {
            column name = Time (sec.),
            string type,
            column type = {S[table-number-alignment = right, table-auto-round = true, scientific-notation = engineering, exponent-mode = scientific, table-format = 1.2e1]},
        },
        display columns/4/.style = {
            column name = Iter.,
            string type,
            column type = {S[table-number-alignment = right, table-auto-round = true, table-format = 5, table-alignment-mode = format]},
        },
        display columns/5/.style = {
            column name = Time (sec.),
            string type,
            column type = {S[table-number-alignment = right, table-auto-round = true, scientific-notation = engineering, exponent-mode = scientific, table-format = 1.2e1]},
        },
        display columns/6/.style = {
            column name = Iter.,
            string type,
            column type = {S[table-number-alignment = right, table-auto-round = true, table-format = 5, table-alignment-mode = format]},
        },
        display columns/7/.style = {
            column name = Time (sec.),
            string type,
            column type = {S[table-number-alignment = right, table-auto-round = true, scientific-notation = engineering, exponent-mode = scientific, table-format = 1.2e1]},
        },
        multicolumn names,
        % mark by best iterations
        every row 0 column 4/.style={postproc cell content/.style={@cell content={\color{table-highlight-best}##1}}},
        % every row 1 column 2/.style={postproc cell content/.style={@cell content={\color{table-highlight-best}##1}}},
        every row 1 column 4/.style={postproc cell content/.style={@cell content={\color{table-highlight-best}##1}}},
        every row 2 column 2/.style={postproc cell content/.style={@cell content={\color{table-highlight-best}##1}}},
        every row 3 column 2/.style={postproc cell content/.style={@cell content={\color{table-highlight-best}##1}}},
        every row 4 column 4/.style={postproc cell content/.style={@cell content={\color{table-highlight-best}##1}}},
        % every row 5 column 2/.style={postproc cell content/.style={@cell content={\color{table-highlight-best}##1}}},
        every row 5 column 4/.style={postproc cell content/.style={@cell content={\color{table-highlight-best}##1}}},
        every row 6 column 2/.style={postproc cell content/.style={@cell content={\color{table-highlight-best}##1}}},
        every row 7 column 2/.style={postproc cell content/.style={@cell content={\color{table-highlight-best}##1}}},
        every row 8 column 4/.style={postproc cell content/.style={@cell content={\color{table-highlight-best}##1}}},
        every row 9 column 4/.style={postproc cell content/.style={@cell content={\color{table-highlight-best}##1}}},
        every row 10 column 2/.style={postproc cell content/.style={@cell content={\color{table-highlight-best}##1}}},
        every row 11 column 2/.style={postproc cell content/.style={@cell content={\color{table-highlight-best}##1}}},
        % mark by least time
        every row 0 column 5/.style={postproc cell content/.style={@cell content={\color{table-highlight-best}##1}}},
        every row 1 column 5/.style={postproc cell content/.style={@cell content={\color{table-highlight-best}##1}}},
        every row 2 column 3/.style={postproc cell content/.style={@cell content={\color{table-highlight-best}##1}}},
        every row 3 column 3/.style={postproc cell content/.style={@cell content={\color{table-highlight-best}##1}}},
        every row 4 column 5/.style={postproc cell content/.style={@cell content={\color{table-highlight-best}##1}}},
        every row 5 column 5/.style={postproc cell content/.style={@cell content={\color{table-highlight-best}##1}}},
        every row 6 column 3/.style={postproc cell content/.style={@cell content={\color{table-highlight-best}##1}}},
        every row 7 column 3/.style={postproc cell content/.style={@cell content={\color{table-highlight-best}##1}}},
        every row 8 column 5/.style={postproc cell content/.style={@cell content={\color{table-highlight-best}##1}}},
        every row 9 column 5/.style={postproc cell content/.style={@cell content={\color{table-highlight-best}##1}}},
        every row 10 column 3/.style={postproc cell content/.style={@cell content={\color{table-highlight-best}##1}}},
        every row 11 column 3/.style={postproc cell content/.style={@cell content={\color{table-highlight-best}##1}}},
    ]
    {data/sparse-approx/results-Hn-time-iter-10-500.csv}
  \end{adjustbox}
  \vspace{1em}
	\caption{
        Results from \cref{subsection:numerics-sparse-approximation} for sparse approximation on $\cH^n$, averaged over $10$ runs.
    }
	\label{table:CRPG-SPCA-comparisons-time-iterations}
\end{table}

Now that we have established the existence of the fixed point of $T$ and shown how to compute it, we can use this result to find the solution of the proximal operator.
\begin{theorem}
    \label{thm:prox-l1-hyperbolic}
    Let $x\in \cH^n$ and $\mu \ge 0$.
    Then $ \prox_{\mu \Vert \cdot \Vert_1}(x) = p_x(t^*)$, with $t^*$ the unique fixed point of $T = \sigma_x\circ p_x$.
\end{theorem}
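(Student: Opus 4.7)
The plan is to derive the first-order optimality conditions for the proximal subproblem
$$\min_{y \in \cH^n} \|y\|_1 + \tfrac{1}{2\mu} \dist^2(x, y),$$
reduce them to the parametric form $y = p_x(t)$ subject to the self-consistency $t = \sigma_x(y)$, and then invoke \cref{thm:fixed-point-iteration} for existence and uniqueness of $t^*$.

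First I would view $\cH^n$ inside $\mathbb R^{n+1}$ via the constraint $\langle y, y\rangle_M = -1$ with $y_{n+1} > 0$ and introduce a Lagrange multiplier $\nu$ for this hyperboloid constraint. Differentiating $\dist(x, y) = \text{arcosh}(-\langle x, y\rangle_M)$ in the ambient coordinates yields $\nabla_y \tfrac{1}{2\mu}\dist^2(x, y) = -\alpha J x$ with $\alpha = d/(\mu \sinh d) = 1/t$, where $d = \dist(x, y)$ and $t = \sigma_x(y)$. Since $y_{n+1} > 0$, the smooth $(n+1)$-st component of the stationarity inclusion $0 \in \partial \|y\|_1 - \alpha J x + \nu J y$ pins down the multiplier via $\nu y_{n+1} = 1 + \alpha x_{n+1}$. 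Substituting this back and doing a componentwise case analysis on the sign of $y_i$ for $i \le n$ forces $y_i \propto \text{sign}(x_i)\max\{0, |x_i| - t\}$ whenever $|x_i| > t$ and $y_i = 0$ otherwise, with common proportionality factor $y_{n+1}/(x_{n+1} + t)$; equivalently, the ratios of $y$ agree exactly with those of $\bar p_x(t)$. The hyperboloid constraint then fixes the overall scale and yields $y = p_x(t)$, with $t = \sigma_x(y) = \sigma_x(p_x(t))$, so $t$ must be a fixed point of $T = \sigma_x \circ p_x$.

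To close the argument I would invoke \cref{thm:fixed-point-iteration} to obtain the unique fixed point $t^*$, and then upgrade the necessary conditions to a full characterization of the minimizer by observing that the proximal objective is the sum of the geodesically convex $\|\cdot\|_1$ (established earlier in the section) and the geodesically strongly convex $\tfrac{1}{2\mu}\dist^2(x, \cdot)$ on the Hadamard manifold $\cH^n$; consequently the minimizer exists and is unique and must equal $p_x(t^*)$. I expect the main obstacle to be the careful bookkeeping around the Minkowski signature: keeping the signs consistent in the constraint term $\nu J y$, in the ambient gradient $-\alpha J x$, and in the piecewise sign convention of $\bar p_x(t)$ (in particular in the borderline regime $|x_i| = t$ and the degenerate case $y_i = 0$, where the subdifferential of $|\cdot|$ is an interval rather than a point). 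A smaller secondary task is to verify that $\bar p_x(t)$ is timelike, so that the normalization in the definition of $p_x(t)$ is well posed; this follows from $(\bar p_x(t))_{n+1} = x_{n+1} + t \ge 1 + t$ dominating the soft-thresholded spatial entries, by the hyperboloid identity $x_{n+1}^2 = 1 + \sum_{i\le n} x_i^2$.
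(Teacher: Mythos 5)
Your route is genuinely different from the paper's: you derive the candidate from the ambient first-order (KKT) system $0 \in \partial^{\mathbb R^{n+1}}\Vert y\Vert_1 - \alpha Jx + \nu Jy$ and then use convexity and the uniqueness of the fixed point to conclude, whereas the paper goes the other way around: it takes $p_x(t^*)$ as given, builds an explicit Euclidean subgradient $v^*$ (with $v^*_i = x_i/t^*$ on the zero entries), and \emph{verifies} the intrinsic optimality condition $\logarithm{y^*}(x) \in \mu\,\partial^{\cH^n}\Vert y^*\Vert_1$. Your componentwise algebra is sound and correctly reproduces the soft-thresholding structure of $\bar p_x(t)$ and the self-consistency $t = \sigma_x(y)$, and the logic ``minimizer exists by strong geodesic convexity, satisfies the necessary conditions, which force $y = p_x(t)$ with $t$ a fixed point of $T$, and $T$ has a unique fixed point'' would indeed close the argument without ever needing sufficiency.

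The genuine gap is the link between your ambient Lagrangian system and the intrinsic optimality condition. The minimizer of the prox subproblem is characterized by $0_{y^*} \in -\logarithm{y^*}(x) + \mu\,\partial^{\cH^n}\Vert y^*\Vert_1$, where $\partial^{\cH^n}$ is the Riemannian (geodesic-convexity) subdifferential; your necessary conditions are instead phrased with the Euclidean subdifferential $\partial^{\mathbb R^{n+1}}\Vert\cdot\Vert_1$ plus a multiplier for the hyperboloid constraint. For your necessity direction you need $\partial^{\cH^n}\Vert y\Vert_1 \subseteq \proj{y}\bigl(J\,\partial^{\mathbb R^{n+1}}\Vert y\Vert_1\bigr)$; for the sufficiency reading of your last paragraph you need the reverse inclusion $\proj{y}\bigl(J\,\partial^{\mathbb R^{n+1}}\Vert y\Vert_1\bigr) \subseteq \partial^{\cH^n}\Vert y\Vert_1$. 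Neither is routine ``bookkeeping around the Minkowski signature'': the $\ell_1$ norm is not geodesically convex on $\cH^n$ merely because it is convex in the embedding, and the paper devotes the bulk of its proof to establishing exactly the second inclusion by a direct estimate involving $\phi(s) = \mathrm{arcosh}(s)/\sqrt{s^2-1}$ and the bounds $\phi(s)\le 1$, $s\phi(s)\ge 1$ for $s\ge 1$. Until you supply (at least one direction of) this subdifferential transfer, your KKT system is not justified as either necessary or sufficient for the intrinsic problem, so the heart of the proof is still missing. The secondary points you flag (timelikeness of $\bar p_x(t)$, the borderline cases $|x_i| = t$) are real but minor by comparison.
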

\begin{proof}
    As discussed in \Cref{section:preliminaries},
    $y^* = \prox_{\mu \Vert \cdot \Vert_1}(x)$
    is characterized by
    $0_{y^*} \in -\logarithm{y^*}(x) + \mu \partial^{\cH^n} \Vert y^*\Vert_1$ in a unique way.
    First, we show $\proj{y}(J\partial^{\mathbb R^{n+1}} \norm{y}_1) \subseteq \partial^{\cH^n} \norm{y}_1$, where
    \begin{equation*}
        \partial^{\mathbb R^{n+1}} \norm{y}_1
        =
        \setDef[auto]{v \in \mathbb R^{n+1}}{
            v_i \in
            \begin{cases}
            \{\text{sign}(y_i)\} & \text{ if } y_i \ne 0\\
            [-1,1] & \text{ if } y_i = 0
            \end{cases}, \;
            i = 1, \dots, n+1
        }
        ,
    \end{equation*}
    is the Euclidean subgradient of $\norm{y}_1$.
    First, note that for any $v \in \partial^{\mathbb R^{n+1}} \norm{y}_1$, we have
    $\langle v,y\rangle = \norm{y}_1$.
    Thus, $\norm{y}_1 +\langle v, z-y\rangle =  \langle v, z\rangle$.
    Hence, $v \in \partial^{\mathbb R^{n+1}} \norm{y}_1$ if and only if
    $
        \norm{z}_1 \ge \langle v, z\rangle
    $
    for all $z \in \mathbb R^{n+1}$.
    Let $z \in \cH^n$ and write
    $\phi(s) = \frac{
                \text{arcosh}\left(s \right)
            }{
                \sqrt{s^2-1}
            }
    $, then
    \begin{align*}
        \norm{z}_1
        &\ge
        \norm{\phi(-\langle y, z\rangle_M) z}_1
        \\
        &\ge
        \langle Jv, \phi(-\langle y, z\rangle_M) z\rangle_M
        \\
        &=
        \langle
            Jv
            ,
            \phi(-\langle y, z\rangle_M)
            \proj{y}(z)
        \rangle_M
        -
        \langle
            Jv
            ,
            \phi(-\langle y, z\rangle_M)
            \langle y, z\rangle_M y
        \rangle_M
        \\
        &=
        \langle
            \proj{y}(Jv)
            ,
            \logarithm{y}{z}
        \rangle_M
        +
        (-\langle y, z\rangle_M)\phi(-\langle y, z\rangle_M)
        \langle
            v
            ,
            y
        \rangle
        \\
        &\ge
        \langle
            \proj{y}(Jv)
            ,
            \logarithm{y}{z}
        \rangle_M
        +
        \norm{y}_1,
    \end{align*}
    where we used that for $s\ge 1$, $\phi(s) \le 1$ and $s\phi(s)\ge 1$.
    Thus, $\proj{y}(Jv) \in \partial^{\cH^n} \norm{y}_1$.

    Let now $y^* =  p_x(t^*)$, where $t^*$ the fixed point of $\sigma_x\circ p_x$ and $v^* \in \partial^{\mathbb R^{n+1}} \norm{y^*}_1$ defined elementwise by $v^*_i = \text{sign}(y^*_i)$ if $y^*_i \ne 0$ and $v^*_i = \frac {x_i} {t^*}$ if $y^*_i = 0$.
    By rewriting the logarithmic map as a scaled projection onto the tangent space and using that $\sigma(y^*) = t^*$, we get that $\logarithm{y^*}(x) = \mu \proj{y}(Jv^*)$ is equivalent to
    \begin{equation*}
        x-t^*Jv^* = - \langle y^*, x-t^*Jv^*\rangle_M y^*.
    \end{equation*}
    This equality holds because, by construction, $ x-t^*Jv^* = y^*$ and $\langle y^*, y^*\rangle_M = -1$.
    Therefore, ${\logarithm{y^*}(x) \in \mu \proj{y}(J\partial^{\mathbb R^{n+1}} \norm{y^*}_1) \subseteq \mu \partial^{\cH^n} \Vert y^*\Vert_1}$.
\end{proof}

\paragraph{\textbf{Implementation Details}}
\label{subsection:numerics-implementation-PCA}
\footnote{The code is available at \url{https://juliamanifolds.github.io/ManoptExamples.jl/stable/examples/CRPG-Sparse-Approximation/}}
We tested the CRPG method using a dataset that was generated as follows.
We picked a random anchor point $\bar q \in \cM$ and then generated $N=1000$ Gaussian tangent vectors at $\bar q$ with standard deviation $\sigma = 1$.
Then, we obtained $N$ points $q_i \in \cM$ by applying the exponential map to the sampled tangent vectors.

We ran our \cref{algorithm:CRPG} using the fixed point iteration from \cref{thm:fixed-point-iteration} to solve the proximal subproblem.
We estimated the diameter of a geodesic ball around the initial point that contains all points $q_i$ and $\bar q$, and we used this number to set the constant stepsize $\lambda$.
This is because if $\cU$ is a geodesically convex set containing all data points $q_j$, the function $g$ is $\zetafirst{D}$-smooth over $\cU$, where $D = \diam(\cU)$; see, \eg,~\cite[Lemma~5]{ZhangSra:2016:1}.
We thus use $\lipgrad = \zeta_{1, -1}(D)$ for the Lipschitz constant of $\grad g$, and hence the constant step size is $\lambda = \frac{1}{\lipgrad}$ in \cref{algorithm:CRPG}.
For the backtracking procedure, we set the initial guess to $s = 4\lambda$, and we used a contraction factor of $\eta = 0.8$ and a warm start factor of $\theta = 10$.

As a comparison, we tested the Cyclic Proximal Point Algorithm (CPPA)~\cite{Bacak:2014:1} with the same dataset.
The stopping criterion for the CPPA was set to a check on the change in the objective function value between two consecutive iterations, with a tolerance of $10^{-7}$ and a cap of at most $5000$ iterations.

Each run was initialized with a random starting point using the uniform distribution on $\cM$.
The fixed point iteration for the proximal subproblem was terminated when either $\vert \sequence{t}{k+1} - \sequence{t}{k} \vert < 10^{-7}$ or after $20$ iterations.
It usually terminated after only a handful of iterations.
\paragraph{\textbf{Discussion}}
The results of the numerical experiment are shown in \cref{table:CRPG-SPCA-comparisons-time-iterations}.
The first column displays the sparsity parameter $\mu$.
As can be read from the table, the runs with a constant stepsize converge in less iterations and are faster time-wise than both the backtracked ones and the CPPA on the experiments in dimensions $100$ and $500$. The backtracking strategy performs the best in the low-dimensional experiments.
The backtracking procedure performs poorly for larger dimensions in this scenario.
There is no meaningful difference between the objective values and sparsities obtained by the three algorithms.

\subsection{Constrained Mean on $\cH^n$}
\label{subsection:constrained-mean-example}

The following example is meant as a continuation of~\cite[Section~6]{BergmannFerreiraNemethZhu:2025:1} which compares their Projected Gradient Algorithm (PGA) with the Augmented Lagrangian Method~\cite[Algorithm~1]{LiuBoumal:2019:2} and the Exact Penalty Method~\cite[Algorithm~2]{LiuBoumal:2019:2}.
We expand on their experiment and compare the PGA to CRPG.

We consider the problem of finding the Riemannian center of mass of a set of $N=400$ distinct points $\{p_1, \ldots, p_N\} \subset \cU$, with $\cU \subset \cH^n$ a geodesically convex set, constrained to a geodesically convex ball $\cB(p_0, r) \subset \cU$ with center $p_0$ and radius $r$.
This can be phrased as an unconstrained optimization problem of the form \cref{eq:splitting} with
\begin{equation*}
g(p) = \frac{1}{2 N} \sum_{i=1}^{N} \dist^2(p, p_i)
\quad
\text{and}
\quad
h(p) = \chi_{\cB(p_0, r)}(p)
,
\end{equation*}
where $\chi_{\cB(p_0, r)}$ is the characteristic function of the geodesically convex ball $\cB(p_0, r)$, that takes the value of zero if $p \in \cB(p_0, r)$ and infinity otherwise.
The function $g$ is strongly geodesically convex, which makes $f = g+h$ strongly geodesically convex too, with gradient $\grad g(p) = -\frac{1}{N} \sum_{i=1}^{N} \logarithm{p}{p_i}$.
The proximal mapping of $h$ is given by the projection onto the ball $\cB(p_0, r)$, that is
\begin{equation}
    \label{eq:proximal-mapping-ball}
    \prox_{\lambda h}(p)
    =
    \begin{cases}
        p
        &
        \text{if } p \in \cB(p_0, r)
        ,
        \\
        \geodesic<l>[auto]{p_0}{p}(
            \frac{r}{\dist(p, p_0)}
        )
        &
        \text{otherwise}
        .
    \end{cases}
\end{equation}

\begin{figure}[tbp]
    \pgfplotstableread[col sep = comma]{data/constrained-mean/CRPG-CnBallConstrMean-2-200-times.csv}\datatabletimes
    \pgfplotstableread[col sep = comma]{data/constrained-mean/CRPG-CnBallConstrMean-2-200-iterations.csv}\datatableiterations
    \begin{subfigure}{.5\textwidth}
        \centering
        \begin{tikzpicture}
            \begin{semilogyaxis}[
                width=0.99\textwidth,
                xlabel={dimension},
                ylabel={time (s)},
                ylabel near ticks,
                xlabel near ticks,
                ymajorgrids,
                label style={font=\scriptsize},
                tick label style={font=\scriptsize},
                legend style={font=\scriptsize, nodes={scale=0.66}},
                legend pos=north east,
                legend cell align=left,
            ]
                \addplot[color=TolBrightBlue,thick] table[x = d, y = crpg_cn] {\datatabletimes};
                \addplot[color=TolBrightCyan,thick] table[x = d, y = crpg_bt] {\datatabletimes};
                \addplot[color=TolBrightRed,thick] table[x = d, y = pga] {\datatabletimes};
            \end{semilogyaxis}
        \end{tikzpicture}
    \caption{CPU runtime for convergence as a function of the manifold dimension.} %break line for amspreprint
    \label{subfig:constr-mean-times}
    \end{subfigure}%
    \begin{subfigure}{.5\textwidth}
        \centering
        \begin{tikzpicture}
            \begin{axis}[
                width=0.99\textwidth,
                xlabel={dimension},
                ylabel={iterations},
                ylabel near ticks,
                xlabel near ticks,
                ymajorgrids,
                label style={font=\scriptsize},
                tick label style={font=\scriptsize},
                legend style={font=\scriptsize, nodes={scale=0.66}},
                legend pos=north east,
                legend cell align=left,
            ]
                \addplot[color=TolBrightBlue,thick] table[x = d, y = crpg_cn] {\datatableiterations};
                \addplot[color=TolBrightCyan,thick] table[x = d, y = crpg_bt] {\datatableiterations};
                \addplot[color=TolBrightRed,thick] table[x = d, y = pga] {\datatableiterations};
                \legend{CRPG constant stepsize, CRPG backtracking, PGA}
            \end{axis}
        \end{tikzpicture}
        \caption{Iterations to convergence as a function of the manifold dimension.}
        \label{subfig:constr-mean-iterations}
    \end{subfigure}%
    \caption{Comparison of CRPG with PGA for the constrained mean problem on $\cH^n$ from \cref{subsection:constrained-mean-example}, averaged over $10$ runs.}
\label{fig:constr-mean}
\end{figure}

\paragraph{\textbf{Implementation Details}}
\footnote{The code is available at \url{https://juliamanifolds.github.io/ManoptExamples.jl/stable/examples/CRPG-Constrained-Mean-Hn/}}
As in \cref{subsection:numerics-sparse-approximation}, since $\cU$ is a geodesically convex set containing all data points $p_j$, $g$ is $\zeta_{1, \kmin}(D)$-smooth over $\cU$, where $D = \diam(\cU)$.
We thus use $\lipgrad = \zeta_{1, -1}(D)$ for the Lipschitz constant of $\grad g$, and hence the constant step size is $\lambda = \frac{1}{\lipgrad}$ in \cref{algorithm:CRPG}.
For the backtracking procedure, we use an initial guess $s = 4 \lambda$, while the contraction factor is set to $\eta = 0.8$ and the warm start factor to $\theta = 10$.
We average each experiment over $10$ runs where the original starting point is perturbed with a random vector tangent to it, sampled from a normal distribution with standard deviation $\sigma = 1$.

\paragraph{\textbf{Discussion}}
The results of the numerical experiment are shown in \cref{fig:constr-mean}.
The left plot shows the CPU runtime in seconds as a function of the manifold dimension, while the right plot shows the number of iterations to convergence.
As can be seen from the plots, CRPG with a constant stepsize converges faster in time than the PGA.
The PGA in turn converges with a rate slightly slower than that of CRPG with backtracking.
The number of iterations is similar for CRPG in both variants, with the CRPG with constant stepsize requiring slightly fewer iterations than the backtracked case.
Interestingly, CRPG requires more iterations than the PGA, especially when the dimension is low, but still converges faster in terms of CPU runtime.

\section{Summary \& Conclusion}
\label{section:conclusion}

In this article, we introduced a Riemannian proximal gradient method that works intrinsically on a Riemannian manifold for solving (possibly strongly) geodesically convex problems.
We established state-of-the-art convergence results on manifolds with bounded curvature with standard geodesic convexity assumptions, thus generalizing the Hadamard setting.
We have demonstrated the efficiency of our method with several numerical experiments.
We also generalized the proximal gradient inequalities to Riemannian manifolds, which allowed us to derive known convergence rates for the proximal point algorithm and the gradient descent method on Hadamard manifolds.

Further research directions include investigating theoretical aspects of retraction-based and accelerated variants of CRPG, and studying other splitting methods based on proximal maps for solving optimization problems intrinsically on Riemannian manifolds.

{\subsection*{Acknowledgements}
\small
P.J.~was funded by the DFG – Projektnummer 448293816.}

% Insert the appendix.
\appendix
\section{Proofs of \cref{section:proximal-gradient-inequalities}}
\label{appendix}

\subsection{Proof of \cref{thm:prox-grad-inequality}}
\label{appendix:proof-prox-grad-inequality}

    Let $q \in \interior(\dom(g))$.
    Then $\Imap[auto]{\lambda}{q} = \prox_{\lambda h}(\gradientstep{q})$, and by~\cite[Lemma~4.2]{FerreiraOliveira:2002:1}, we obtain 
    ${
        \frac{1}{\lambda}
        \logarithm{\Imap[auto]{\lambda}{q}}{
            \gradientstep{q}
        }
        \in
        \partial h(\Imap[auto]{\lambda}{q})
        ,
    }$
    which implies
    \begin{equation*}
        h(p)
        \ge
        h(\Imap[auto]{\lambda}{q})
        +
        \frac{1}{\lambda}
        \riemannian{
            \logarithm{\Imap[auto]{\lambda}{q}}{
            \gradientstep{q}
            }
        }{
            \logarithm{\Imap[auto]{\lambda}{q}}{p}
        }
        ,
    \end{equation*}
    for all $p \in \cM$, since $h$ is geodesically convex.
    By adding $g(p)$ and subtracting $g(\Imap[auto]{\lambda}{q})$ from both sides and rearranging the terms, we get
    $
        f(p)
        -
        f(\Imap[auto]{\lambda}{q})
        \ge
        g(p)
        -
        g(\Imap[auto]{\lambda}{q})
        +
        \frac{1}{\lambda}
        \riemannian{
            \logarithm{\Imap[auto]{\lambda}{q}}\gradientstep{q}
        }{
            \logarithm{\Imap[auto]{\lambda}{q}}p
        }
        ,
    $
    and by \cref{eq:prox-grad-assumption} we obtain
    \begin{equation}
        \label{eq:first-prox-grad-bound}
        \begin{aligned}
            f(p)
            -
            f(\Imap[auto]{\lambda}{q})
            &
            \ge
            \glinear(p, q)
            +
            \frac{1}{\lambda}
            \riemannian{
                \logarithm{q}{\gradientstep{q}}
            }
            {
                \logarithm{q}{\Imap[auto]{\lambda}{q}}
                -
                \logarithm{q}{p}
            }
            \\
            &
            \quad
            -
            \frac{1}{2 \lambda}
            \dist^2(
                q
                ,
                \Imap[auto]{\lambda}{q}
            )
            +
            \frac{1}{\lambda}
            \riemannian{
                \logarithm{\Imap[auto]{\lambda}{q}}\gradientstep{q}
            }{
                \logarithm{\Imap[auto]{\lambda}{q}}p
            }
            ,
        \end{aligned}
    \end{equation}
    where the last inequality follows from the definition of $\glinear$ and $\gradientstep{q}$, and from rearranging the terms.
    We now apply the cosine laws, see, \eg,~\cite[Corollary~15]{MartinezRubioPokutta:2023:1}, to the inner product terms as follows:
    \begin{align*}
        2
        \riemannian{
            \logarithm{q}{\gradientstep{q}}
        }
        {
            \logarithm{q}{\Imap[auto]{\lambda}{q}}
        }
        &
        \ge
        \zeta_{2, \kmax}(D_1)
        \dist^2(q, \gradientstep{q})
        +
        \dist^2(q, \Imap[auto]{\lambda}{q})
        -
        \dist^2(\gradientstep{q}, \Imap[auto]{\lambda}{q})
        ,
    \end{align*}
    where $D_1$ is the diameter of the uniquely geodesic triangle with vertices $q$, $\gradientstep{q}$, and $\Imap[auto]{\lambda}{q}$.
    Similarly, we have
    \begin{equation}
        \label{eq:cosine-law-bound-2}
        \begin{aligned}
            -
            2
            \riemannian{
                \logarithm{q}\gradientstep{q}
            }
            {
                \logarithm{q}p
            }
            &
            \ge
            -
            \dist^2(\gradientstep{q}, q)
            -
            \zeta_{1, \kmin}(D_2)
            \dist^2(p, q)
            +
            \dist^2(p, \gradientstep{q})
            ,
        \end{aligned}
    \end{equation}
    with $D_2$ the diameter of the uniquely geodesic triangle with vertices $q$, $\gradientstep{q}$, and $p$.
    Finally, we get
    \begin{align*}
        2
        \riemannian{
            \logarithm{\Imap[auto]{\lambda}{q}}\gradientstep{q}
        }
        {
            \logarithm{\Imap[auto]{\lambda}{q}}p
        }
        &
        \ge
        \zeta_{2, \kmax}(D_3)
        \dist^2(\gradientstep{q}, \Imap[auto]{\lambda}{q})
        +
        \dist^2(p, \Imap[auto]{\lambda}{q})
        -
        \dist^2(p, \gradientstep{q})
        ,
    \end{align*}
    where $D_3$ is the diameter of the uniquely geodesic triangle with vertices $\Imap[auto]{\lambda}{q}$, $\gradientstep{q}$, and $p$.
    Plugging these bounds back into \cref{eq:first-prox-grad-bound} yields \cref{eq:prox-grad-inequality-1}

    Analogously, using the cosine inequalities with different vertices, we can bound the inner product terms in \cref{eq:first-prox-grad-bound} to obtain \cref{eq:prox-grad-inequality-2}.

\subsection{Proof of \cref{cor:fundamental-prox-grad-inequality}}
\label{appendix:proof-fundamental-prox-grad-inequality}

Observe that, on Hadamard manifolds, one has $\kmax \le 0$, which implies $\zeta_{2, \kmax}(r) = 1$ for all $r \in \bbR$.
    Furthermore, the remaining cosine inequality in \cref{eq:cosine-law-bound-2} holds with the tighter constant $\zeta_{1, \kmin}(\dist(q, \gradientstep{q}))$,  see, \eg,~\cite[Remark~16]{MartinezRubioPokutta:2023:1}.
    Combining these facts with \cref{eq:prox-grad-inequality-1} gives
    \begin{equation}
        \label{eq:hadamard-prox-grad-inequality-2}
        f(p)
        -
        f(\Imap[auto]{\lambda}{q})
        \ge
        \glinear(p, q)
        +
        \frac{1}{2 \lambda}
        \dist^2(p, \Imap[auto]{\lambda}{q})
        -
        \frac{\zeta_{1, \kmin}(\dist(q, \gradientstep{q}))}{2 \lambda}
        \dist^2(p, q)
        .
    \end{equation}
    Analogously, \cref{eq:prox-grad-inequality-2} holds with the tighter constant $\zeta_{1, \kmin}(\dist(p, q))$, giving
    \begin{equation}
        \label{eq:hadamard-prox-grad-inequality-3}
        \begin{aligned}
            f(p)
            -
            f(\Imap[auto]{\lambda}{q})
            &
            \ge
            \glinear(p, q)
            +
            \frac{1}{2 \lambda}
            \dist^2(p, \Imap[auto]{\lambda}{q})
            -
            \frac{1}{2 \lambda}
            \dist^2(p, q)
            \\
            &
            \quad
            +
            \frac{1 - \zeta_{1, \kmin}(\dist(p, q))}{2 \lambda}
            \dist^2(q, \gradientstep{q})
            .
        \end{aligned}
    \end{equation}
    Finally, adding \cref{eq:hadamard-prox-grad-inequality-2} and \cref{eq:hadamard-prox-grad-inequality-3} yields the claim.

\subsection{Proof of \cref{thm:rpp-rgd-convergence-rates}}
\label{appendix:proof-convergence-rates}
Because \cref{eq:convex-sufficient-decrease-condition} is satisfied by either stepsize strategy and with $\zetadelta \equiv 1$, substituting $\lambda = \sequence{\lambda}{n}$, $p = p^\ast$, and $q = \sequence{p}{n}$ in the fundamental prox-grad inequality \cref{eq:hadamard-prox-grad-inequality-1} yields
\begin{equation}
    \label{eq:prox-grad-convergence-failure}
    \begin{aligned}
        -
        2 \sequence{\lambda}{n}
        \sequence{\Delta}{n+1}
        &
        \ge
        2 \sequence{\lambda}{n}
        \glinear(p^\ast, \sequence{p}{n})
        +
        \dist^2(
            p^\ast
            ,
            \sequence{p}{n+1}
        )
        \\
        &
        \quad
        -
        \frac{\zeta_{1, \kmin}(\dist(\sequence{p}{n}, \gradientstep{\sequence{p}{n}})) + 1}{2}
        \dist^2(p^\ast, \sequence{p}{n})
        \\
        &
        \quad
        -
        \frac{\zeta_{1, \kmin}(\dist(p^\ast, \sequence{p}{n})) - 1}{2}
        \dist^2(\sequence{p}{n}, \gradientstep{\sequence{p}{n}})
        ,
    \end{aligned}
\end{equation}
where the convexity of $g$ implies $\glinear(p^\ast, \sequence{p}{n}) \ge 0$, which can hence be dropped.

Observe that, if $g = 0$, \ie in the purely RPP case, then $\gradientstep{\sequence{p}{n}} = \sequence{p}{n}$, and \cref{eq:prox-grad-convergence-failure} simplifies to
$
    -
    2 \sequence{\lambda}{n}
    \sequence{\Delta}{n+1}
    \ge
    \dist^2(
        p^\ast
        ,
        \sequence{p}{n+1}
    )
    -
    \dist^2(
        p^\ast
        ,
        \sequence{p}{n}
    )
    .
$
Also note that, if $g=0$, any $C > 0$ satisfies $0 \le \frac{C}{2} \dist^2(p, q)$, so $\lipgrad = C$.
Summing over $n = 0, \ldots, k-1$ and telescoping gives
\begin{equation*}
    -2 \sum_{n=0}^{k-1} \sequence{\lambda}{n} \sequence{\Delta}{n+1}
    \ge
    \dist^2(p^\ast, \sequence{p}{k})
    -
    \dist^2(p^\ast, \sequence{p}{0})
    \ge
    -\dist^2(p^\ast, \sequence{p}{0}).
\end{equation*}
Since the sequence $\sequence{\Delta}{k}$ is nonincreasing,
$
    \sum_{n=1}^{k}
    \sequence{\Delta}{n}
    =
    \sum_{n=0}^{k-1}
    \sequence{\Delta}{n+1}
    \ge
    k
    \sequence{\Delta}{k}
    ,
$
which used in the previous inequality with the lower bound $\sequence{\lambda}{n} \ge \frac{\beta}{\lipgrad}$ from \cref{lemma:convex-stepsize-bounds} yields
\begin{equation*}
    2k \frac{\beta}{\lipgrad}\sequence{\Delta}{k}
    \le
    \dist^2(p^\ast, \sequence{p}{0}),
\end{equation*}
after rearranging the terms.
Setting $R \coloneq \dist(p^\ast, \sequence{p}{0})$, this gives the $\cO\left(\frac{\lipgrad R^2}{2 \betastepsize k}\right)$ convergence rate of the proximal point method.
Compare, \eg,~\cite[Theorem~3.5]{BentoFerreiraMelo:2017:1}.

Similarly, if $h = 0$, \ie in the purely RGD case, then $\gradientstep{\sequence{p}{n}} = \sequence{p}{n+1}$ for all $n$, and \cref{eq:hadamard-prox-grad-inequality-3} becomes
\begin{equation}
    \label{eq:rgd-delta-bound}
    \begin{aligned}
        -
        2 \sequence{\lambda}{n}
        \sequence{\Delta}{n+1}
        &
        \ge
        \dist^2(
            p^\ast
            ,
            \sequence{p}{n+1}
        )
        -
        \dist^2(
            p^\ast
            ,
            \sequence{p}{n}
        )
        -
        \zeta_{1, \kmin}(\dist(p^\ast, \sequence{p}{n}))
        \dist^2(\sequence{p}{n}, \sequence{p}{n+1})
        ,
    \end{aligned}
\end{equation}
dropping the positive terms.
The sufficient decrease condition \cref{eq:sufficient-decrease-second-version-2} with $p = \sequence{p}{n}$, can be multiplied by $\zeta_{1, \kmin}(\dist(p^\ast, \sequence{p}{n}))$, reading
\begin{equation}
    \label{eq:rgd-curvature-sufficient-decrease}
    \zeta_{1, \kmin}(\dist(p^\ast, \sequence{p}{n}))
    \left(
        \sequence{\Delta}{n}
        -
        \sequence{\Delta}{n+1}
    \right)
    \ge
    \frac{\zeta_{1, \kmin}(\dist(p^\ast, \sequence{p}{n}))}{2 \sequence{\lambda}{n}}
    \dist^2(\sequence{p}{n}, \sequence{p}{n+1})
    ,
\end{equation}
and added to \cref{eq:rgd-delta-bound} to cancel out the term involving $\dist^2(\sequence{p}{n}, \sequence{p}{n+1})$ as
\begin{align*}
    2 \sequence{\lambda}{n}
    \left[
        \zeta_{1, \kmin}(\dist(p^\ast, \sequence{p}{n}))
        (
            \sequence{\Delta}{n}
            -
            \sequence{\Delta}{n+1}
        )
        -
        \sequence{\Delta}{n+1}
    \right]
    &
    \ge
    \dist^2(
        p^\ast
        ,
        \sequence{p}{n+1}
    )
    -
    \dist^2(p^\ast, \sequence{p}{n})
    .
\end{align*}
If we now assume a constant stepsize $\lambda$, rearranging the terms and summing over $n = 0, \ldots, k-1$ gives
$
    -
    \zeta_{1, \kmin}(R)
    \sequence{\Delta}{0}
    +
    \zeta_{1, \kmin}(R)
    \sequence{\Delta}{k}
    +
    \sum_{n=1}^{k}
    \sequence{\Delta}{n}
    \le
    \frac{R^2}{2 \lambda}
    ,
$
where we used the fact that the function $\zeta_{1, \kmin}(r)$ is monotonically increasing for $r \ge 0$.
Since the sequence $\sequence{\Delta}{k}$ is nonincreasing, we have again
${
    \sum_{n=1}^{k}
    \sequence{\Delta}{n}
    =
    \sum_{n=0}^{k-1}
    \sequence{\Delta}{n+1}
    \ge
    k
    \sequence{\Delta}{k}
    ,
}$
which combined with the previous inequality yields the second claim.
Compare, \eg,~\cite[Theorem~13]{ZhangSra:2016:1}, for the convergence rate of RGD on Hadamard manifolds.

% Insert the bibliography.
\printbibliography

\end{document}